\numberwithin{equation}{section}
\theoremstyle{plain}
\newtheorem{theorem}{Theorem}[section]
\newtheorem*{theorem*}{Theorem}
\newtheorem{lemma}{Lemma}[section]
\newtheorem{proposition}{Proposition}[section]
\newtheorem{claim}{Claim}[lemma]
\theoremstyle{remark}
\newtheorem{remk}{Remark}[section]
\DeclareMathOperator{\1}{\mathbbm{1}}
\newcommand{\bel}{\begin{equation}\label}
\newcommand{\eel}{\end{equation}}
\newcommand{\be}{\begin{equation}}
\newcommand{\ee}{\end{equation}}
\newcommand{\bes}{\begin{equation*}}
\newcommand{\ees}{\end{equation*}}
\newcommand{\ninf}{n\to\infty}
\newcommand{\ninN}{n\in{\mathbb N}}
\newcommand{\ninNo}{n\in{\mathbb N}_0}
\DeclarePairedDelimiter\event{\Big\{}{\Big\}}
\DeclarePairedDelimiter\p{(}{)}
\newcommand{\ve}{\varepsilon}
\newcommand{\mbR}{{\mathbb R}}
\newcommand{\mbN}{{\mathbb N}}
\newcommand{\mn}{{\mathbb N}}
\newcommand{\R}{\mathbb{R}}
\newcommand{\mbE}{{\mathbb E}}
\newcommand{\mbP}{{\mathbb P}}
\newcommand{\cF}{{\cal F}}
\newcommand{\supp}{\supp{\rm supp}}
\newcommand{\E}{\mathbb{E}}
\newcommand{\Pb}{\mathbb{P}}
\newcommand{\tow}{\stackrel{{\rm d}}{\to}}
\newcommand{\toP}{\stackrel{\mbP}{\to}}
\begin{document}

\begin{frontmatter}
\title{Functional limit theorems for random walks perturbed by positive alpha-stable jumps}
\runtitle{Functional limit theorems for random walks}

\begin{aug}
\author[A]{\fnms{Alexander} \snm{Iksanov}\ead[label=e1]{iksan@univ.kiev.ua}},
\author[B, C]{\fnms{Andrey} \snm{Pilipenko}\ead[label=e2,mark]{pilipenko.ay@gmail.com}}
\and
\author[D]{\fnms{ Ben} \snm{Povar}\ead[label=e3,mark]{oleksandr.prykhodko@warwick.ac.uk}}
\address[A]{Faculty of Computer Science and Cybernetics, Taras Shevchenko National University of Kyiv, Kyiv, Ukraine, \printead{e1}}

\address[B]{Department of the Theory of Stochastic Processes, Institute of Mathematics, National Academy of Sciences of Ukraine, Kyiv, Ukraine, \printead{e2}}
\address[C]{Department of Physics and Mathematics, National Technical University of Ukraine ``Igor Sikorsky Kyiv Polytechnic Institute'', Kyiv, Ukraine, \printead{e2}}

\address[D]{University of Warwick, Department of Statistics, Warwick, UK   \printead{e3}}
\end{aug}

\begin{abstract}
    Let $\xi_1$, $\xi_2,\ldots$ be i.i.d.\ random variables of zero mean and finite variance and $\eta_1$, $\eta_2,\ldots$ positive i.i.d.\ random variables whose distribution belongs to the domain of attraction of an $\alpha$-stable distribution, $\alpha\in (0,1)$. The two collections are assumed independent. We consider a Markov chain with jumps of two types. If the present position of the Markov chain is positive, then the jump $\xi_k$ occurs; if the present position of the Markov chain is nonpositive, then the jump $\eta_k$ occurs. We prove functional limit theorems for this and two closely related Markov chains under Donsker's scaling. The weak limit is a nonnegative process $(X(t))_{t\geq 0}$ satisfying a stochastic equation
    ${\rm d}X(t)={\rm d}W(t)+ {\rm d}U_\alpha(L_X^{(0)}(t))$, where $W$ is a Brownian motion, $U_\alpha$ is an $\alpha$-stable subordinator which is independent of $W$, and $L_X^{(0)}$ is a local time of $X$ at $0$. Also, we explain that $X$ is a Feller Brownian motion with a `jump-type' exit from $0$.
\end{abstract}

\begin{keyword}
\kwd{Feller Brownian motion}
\kwd{functional limit theorem}
\kwd{locally perturbed random walk}
\kwd{oscillating random walk}
\end{keyword}

\end{frontmatter}


\section{Introduction and main result}

Let $x_0\in \mn_0:=\mn\cup\{0\}$ and $X(n)$ denote the number of claims at time $n\in\mn_0$ in a discrete time single server queuing model. The sequence
$X:=(X(n))_{n\in\mn_0}$ satisfies the classical Lindley recursion
\[
X(0)=x_0,\quad X(n)=(X(n-1)+\theta_{n})^+,\quad n\in\mn,
\]
where, as usual, $x^+=\max(x,0)$ for $x\in\mathbb{R}$, and a random variable $\theta_n$ represents `arrival minus departure at step $n$', see, for instance, Section III.6 in \cite{Asmussen:2003} or  Section 9.2 in \cite{Whitt:2002}. On the other hand, the sequence $X$ can be obtained as an action of the Skorokhod map $\Psi$ on
the random walk $S_\theta:=(S_\theta(n))_{n\in\mn_0}$ defined by $S_\theta(n):=x_0+\theta_1+\ldots+\theta_n$ for $n\in\mn_0$, namely,
\[
X= \Psi(S_\theta),
\]
see, for instance, Section 9.3.1 in \cite{Whitt:1980}. The scaling limit of $\Psi(S_\theta)$ is Skorokhod's reflection of the scaling limit of $S_\theta$, provided that the latter is well-defined. Assume, for instance, that $\theta_1$, $\theta_2,\ldots$ are independent identically distributed random variables of zero mean and finite variance. Then Donsker's scaling limit of $X$ is a reflected Brownian motion. The results of this type are well-known and can be interpreted from different viewpoints, see, for example, Chapter VI in \cite{Bertoin:1996}, Sections 1.9, 1.10 and 3.3 in \cite{Borovkov:1984}, Sections 8.7 and 8.8 in \cite{Whitt:2002}. Usually, the continuous mapping theorem is a main technical tool of the corresponding proofs.

Consider a slightly different model in which $X(n)$ is the number of goods at time $n$ in a storage. If $X(n-1)+\theta_n<0$ (that is, the request at time $n$ cannot be satisfied), then the storage is refilled with a random amount of goods or several random batches of goods. The purpose of the present paper is to prove functional limit theorems with Donsker's scaling for this and similar models, which only differ by the way of reflection upon crossing $0$. It will be shown below that if the distribution of the added number of goods belongs to the domain of attraction of an $\alpha$-stable distribution, $\alpha\in(0,1)$, then the heavy traffic limit is a reflected Brownian motion with infinite intensity jump-exit from $0$. In contrast to the classical heavy traffic limit theorems, the latter process is not a solution to the Skorokhod reflection problem.

Let $\xi$, $\xi_1$, $\xi_2,\ldots$ be i.i.d. real-valued random variables, $\eta$, $\eta_1$, $\eta_2,\ldots$ positive i.i.d. random variables and $(\tilde S_v(0))_{v>0}$, $(\hat S_v(0))_{v>0}$ and $(\grave S_v(0))_{v>0}$ families of random variables living on the same probability space; the three
collections being independent. For each $v>0$, define the random sequences $\tilde S_v:=(\tilde S_v(n))_{n\in\mn_0}$, $\hat S_v:=(\hat S_v(n))_{n\in\mn_0}$ and $\grave S_v:=(\grave S_v(n))_{n\in\mn_0}$ recursively as follows: for $n\in\mn_0$,
\be \label{tildeS}
\tilde S_v(n+1)=
\begin{cases}
\tilde S_v(n) + \xi_{n+1}, & \tilde S_v(n)>0, \\
\tilde S_v(n) + \eta_{n+1}, & \tilde S_v(n) \leq 0.
\end{cases}
\ee
\[
 \hat S_v(n+1)=
\begin{cases}
\hat S_v(n) + \xi_{n+1}, & \hat S_v(n) > 0, \\
\eta_{n+1}, & \hat S_v(n) \leq 0;
\end{cases}
\]
and
 \[
\grave S_v(n+1)=
\begin{cases}
\grave S_v(n) + \xi_{n+1}, & \grave S_v(n)>0 \mbox{ and } \grave S_v(n) + \xi_{n+1} > 0, \\
0, & \grave S_v(n)>0 \mbox{ and } \grave S_v(n) + \xi_{n+1} \leq 0,\\
\eta_{n+1}, & \grave S_v(n)=0
\end{cases}
\]

When $\tilde S_v(0)=\tilde S(0)$ for all $v>0$ and some random variable $\tilde S(0)$, we write $\tilde S$ for $\tilde S_v$.

The sequence $\tilde S$ is known in the literature as an oscillating random walk. The notion was introduced in \cite{Kemperman:1974} under no particular assumptions concerning the distributions of $\xi$ and $\eta$. Properties of general oscillating random walks and related models, in particular, recurrence/transience were investigated in \cite{Durrett+Kesten+Lawler:1991, Kemperman:1974, Menshikov+Petritis+Wade:2018, Rogozin+Foss:1978}. An explicit formula for $(z,t)\mapsto \sum_{n\geq 0}z^n\E e^{{\rm i}t\tilde S(n)}$, $|z|<1$, $t\in\R$ was given in \cite{Lotov:1996}. As far as we know, prior to our work there was just one functional limit theorem in this setting \cite{Helland:1981}. We discuss it in the paragraph following Theorem \ref{thm:main3}.

Denote by $D:=D[0,\infty)$ the Skorokhod space of c\`{a}dl\`{a}g functions defined on $[0,\infty)$.
We assume that the space $D$ is endowed with the $J_1$-topology and write $\Rightarrow$ for weak convergence in this space. Also, on several occasions, we denote by $\Rightarrow$ weak convergence in $D^k$ for $k\geq 2$ equipped with the product $J_1$-topology. In the latter case the topology and the space will be specified. Comprehensive information concerning the $J_1$-topology can be found in the books
\cite{Billingsley:1999, Jacod+Shiryaev:2003}. As usual, $\overset{\Pb}\to$ denotes convergence in probability, $\lfloor x\rfloor$ denotes the integer part of $x\in\R$ and $\circ$ denotes composition of (random) functions. Recall that the Euler gamma function $\Gamma$ is given by $\Gamma(x):=\int_0^\infty e^{-y}y^{x-1}{\rm d}y$ for $x>0$.

Here is our main result.
\begin{theorem}\label{thm:main}
Assume that $\E \xi = 0$, $\sigma^2:={\rm Var}\,\xi\in (0,\infty)$ and that
\begin{equation}\label{eq:reg}
\Pb\{\eta>z\}\sim z^{-\alpha}\ell(z),\quad z\to\infty
\end{equation}
for some $\alpha\in (0,1)$ and {some} $\ell$ slowly varying at $\infty$.
  If the initial values satisfy
$$\frac{\tilde S_v(0)}{v^{1/2}}~\overset{\Pb}\to~ x,\quad v\to\infty$$
for some $x\geq 0$, then
\begin{equation}\label{eq:impo}
\Big(\frac{\tilde S_v(\lfloor vt\rfloor)}{\sigma v^{1/2}}\Big)_{t\geq 0}~\Rightarrow~ (W_\alpha(x,t))_{t\geq 0},
\quad v\to\infty,
\end{equation}
where the limit process $W_\alpha^{(x)}:=(W_\alpha(x,t))_{t\geq 0}$ is given by
\bel{eq:representation}
W_\alpha(x,t)=x+W(t)+U_\alpha \circ U_\alpha^\leftarrow \circ ((-x+M(t))^+),\quad t\geq 0
\ee
Here $W:=(W(t))_{t\geq 0}$ is a standard Brownian motion;
$$M(t) = -\min_{s\in [0,\,t]}\,W(s),\quad t\geq 0,$$ $U_\alpha:=(U_\alpha(t))_{t\geq 0}$ is a drift-free $\alpha$-stable subordinator independent of $W$
with
\begin{equation}\label{eq:1}
\E\exp(-zU_\alpha(t))=\exp(-\Gamma(1-\alpha)tz^\alpha),\quad t,z  \geq 0,
\end{equation}
and $U_\alpha^\leftarrow:=(U_\alpha^\leftarrow(t))_{t\geq 0}$ is an inverse $\alpha$-stable
subordinator defined by $$U_\alpha^\leftarrow (t)=\inf\{s\geq 0: U_\alpha(s) >t\},\quad t\geq 0.$$
\end{theorem}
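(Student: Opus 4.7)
The plan is to split $\tilde S$ into the unperturbed $\xi$-random walk plus a pure-jump correction concentrated on the reset epochs, and to identify the scaling limit via Donsker's theorem, $\alpha$-stable convergence of the $\eta$-sums, and a $J_1$-continuous mapping argument for composition and addition. Define the reset indices $\tau_0:=0$, $\tau_k:=\inf\{n>\tau_{k-1}:\tilde S(n)\le 0\}$ for $k\ge 1$, the excursion heights $\tilde\eta_k:=\eta_{\tau_{k-1}+1}$, the durations $\sigma_k:=\tau_k-\tau_{k-1}$, and the reset counter $\nu(n):=\sum_{k=1}^n\1_{\{\tilde S(k-1)\le 0\}}$. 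The strong Markov property combined with independence of $(\xi_j)$ and $(\eta_j)$ makes the pairs $(\sigma_k,\tilde\eta_k)$ i.i.d., with $\tilde\eta_k\eqw\eta$ and, conditionally on $\tilde\eta_k=x$, $\sigma_k\eqw 1+T(x)$, where $T(x)$ is the first-passage time of the $\xi$-random walk from $x$ into $(-\infty,0]$. Telescoping the recursion~\eqref{tildeS} yields
\[
\tilde S(n)=S(n)+\sum_{k=1}^{\nu(n)}\tilde\eta_k-R(n),\qquad R(n):=\sum_{k=1}^n\bigl(\xi_k+\tilde S(k-1)\bigr)\1_{\{\tilde S(k-1)\le 0\}},
\]
where $S(n):=\xi_1+\cdots+\xi_n$.

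Brownian first-passage scaling gives $\Pb\{T(x)>n\}\asymp x/\sqrt n$, and combined with~\eqref{eq:reg} this yields $\Pb\{\sigma_1>n\}\asymp n^{-\alpha/2}$, hence $\nu(\lfloor vt\rfloor)=O_P(v^{\alpha/2})$. Consequently, the first piece of $R$ is a random sum of $\nu(n)$ centered i.i.d.\ $\xi$'s (one per reset), of size $O_P(v^{\alpha/4})$, while the second piece is a sum of $\nu(n)$ tight undershoots $\tilde S(\tau_{k-1})$ bounded by classical ladder-height theory, of size $O_P(v^{\alpha/2})$; both are $o_P(\sqrt v)$ for $\alpha<1$, and a maximal-inequality argument upgrades this to $R(\lfloor v\cdot\rfloor)/\sqrt v\to 0$ in probability in $J_1$. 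Donsker's theorem gives $S(\lfloor vt\rfloor)/(\sigma\sqrt v)\Rightarrow W(t)$ and $\alpha$-stable convergence gives $\sum_{k=1}^{\lfloor v^{\alpha/2}s\rfloor}\tilde\eta_k/(\sigma\sqrt v)\Rightarrow U_\alpha(s)$, with $W$ and $U_\alpha$ independent. The crux is then to prove the joint convergence
\[
\Big(\frac{S(\lfloor vt\rfloor)}{\sigma\sqrt v},\ \frac{1}{\sigma\sqrt v}\sum_{k=1}^{\nu(\lfloor vt\rfloor)}\tilde\eta_k\Big)\Rightarrow\bigl(W(t),\ U_\alpha\circ U_\alpha^{\leftarrow}\circ M(t)\bigr)
\]
in the product $J_1$-topology, which reduces to showing $\nu(\lfloor vt\rfloor)/v^{\alpha/2}\Rightarrow U_\alpha^{\leftarrow}(M(t))$ jointly with Donsker's limit. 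This identification follows from iterating the excursion relation $\tilde S(\tau_k)-\tilde\eta_k=S(\tau_k)-S(\tau_{k-1}+1)$: by the $k$-th reset the $\xi$-random walk must have dropped by $\sum_{j\le k}\tilde\eta_j$, so $-W$ must reach the level $v^{-1/2}\sum_{j\le k}\tilde\eta_j\to U_\alpha(k/v^{\alpha/2})$; equivalently, at real time $t$ the rescaled number of completed resets equals $\sup\{s:U_\alpha(s)\le M(t)\}=U_\alpha^{\leftarrow}(M(t))$.

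Plugging back into the decomposition, and noting that the two limit components have disjoint jump sets (since $W$ is continuous), the $J_1$-continuous mapping theorem for addition yields~\eqref{eq:impo}. The main technical obstacle is the joint convergence above with the composition $U_\alpha\circ U_\alpha^{\leftarrow}\circ M$: both $U_\alpha$ (with jumps) and $U_\alpha^{\leftarrow}$ (with flat stretches) are discontinuous, so standard $J_1$-continuity of composition does not apply off-the-shelf. One must exploit a.s.\ continuity and strict monotonicity of $M$ on the zero set of the reflected Brownian motion $W+M$, together with a.s.\ strict increase of $U_\alpha$, in order to secure $J_1$-continuity of the relevant composition map at the limiting point. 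A secondary technicality is the uniform-on-compacts control of $R/\sqrt v$, requiring tightness of overshoots and a maximal inequality for the random sum of wasted $\xi$'s.
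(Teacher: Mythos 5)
Your decomposition of $\tilde S$ into the full $\xi$-walk, the $\eta$-sums composed with the reset counter, and a correction term is a legitimate real-time alternative to the paper's route (the paper instead passes to an equivalent model $S^\ast$, shows that $S_\xi(n)+S_\zeta\circ\nu_\zeta\circ m(n)$ is a time-change of $S^\ast$ in Lemma \ref{lemma:timeChange}, and then undoes the time change via Lemmas \ref{lemma:lambda_id} and \ref{lemma:randomWalkTimeChangeConvergence}). Your identification of the reset counter as a first-passage functional of the $\eta$-sums over the running minimum is the same key idea as the paper's, and your route would avoid some of the time-change technicalities. However, there is a genuine gap in your treatment of the undershoots, and it sits exactly where the paper's hardest estimate lives.

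The undershoot $\tilde S(\tau_k)$, $k\ge 1$, is distributed as minus the overshoot $\gamma\overset{{\rm d}}{=}S_\chi(\nu_\chi(\eta))-\eta$ of the descending-ladder-height walk of $S_\xi$ over the independent heavy-tailed level $\eta$. Under the standing hypotheses $\E\xi=0$, ${\rm Var}\,\xi<\infty$ one only gets $\E\chi<\infty$ (not $\E\chi^2<\infty$), and $\E\eta=\infty$; consequently $\gamma$ need not have finite mean, and ``tight undershoots, hence their sum over $\nu(n)\asymp v^{\alpha/2}$ resets is $O_P(v^{\alpha/2})$'' is not valid reasoning: tightness of the individual terms gives no control of their sum. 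What is actually needed --- and what the paper proves as Lemma \ref{lem:ratio} using the renewal function of $\chi$, the elementary renewal theorem and Karamata's theorem --- is the tail comparison $\Pb\{\gamma>x\}=o(\Pb\{\eta>x\})$ as $x\to\infty$, which yields that the sum of $N$ undershoots is $o_P(a(N))=o_P(v^{1/2})$ for $N\asymp v^{\alpha/2}$; this is weaker than your claimed $O_P(v^{\alpha/2})$ but is all one can get and all one needs. Note also that this estimate enters not only in killing your correction term but also in your identification of $\nu(\lfloor vt\rfloor)$: before the $k$-th reset the concatenated $\xi$-walk must drop by $\sum_{j\le k}(\tilde\eta_j+\gamma_j)$, not by $\sum_{j\le k}\tilde\eta_j$, so the first-passage level involves $S_\zeta$ with $\zeta=\eta+\gamma$ and the same comparison is required to replace it by $S_\eta$. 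A secondary, fixable, imprecision: in the composition step, $M$ is not strictly monotone; the correct ingredient is that for each fixed level $a$ the set $\{u\ge 0:M(u)=a\}$ is a.s.\ a singleton, applied to the countably many jump levels of $U_\alpha\circ U_\alpha^{\leftarrow}$ via independence --- this is the content of the paper's Lemma \ref{lemma:composition} combined with \eqref{eq:const}.
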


The limit process $W^{(x)}_\alpha$ is rather non-standard. This statement is justified in Section \ref{sec:properties}. As an appetizer, we only mention here that $W^{(x)}_\alpha$ is a {\it Feller Brownian motion} with a `jump-type' exit from $0$.

Theorem \ref{thm:main3} states that the result of Theorem \ref{thm:main} continues to hold, with the sequence $\tilde S$ replaced by either $\hat S$ or $\grave S$.
\begin{theorem}\label{thm:main3}
 Assume that $\E \xi = 0$, $\sigma^2={\rm Var}\,\xi\in (0,\infty)$ and that condition \eqref{eq:reg} holds.
If the initial values satisfy $$\frac{\hat S_v(0)}{v^{1/2}}~\overset{\Pb}\to~ x,\quad v\to\infty$$ for some $x\geq 0$, then
\[
\Big(\frac{\hat S_v(\lfloor vt\rfloor)}{\sigma v^{1/2}}\Big)_{v\geq 0}~\Rightarrow~ (W_\alpha(x,t))_{t\geq 0},
\quad v\to\infty.
\]
If $$\frac{\grave S_v(0)}{v^{1/2}}~\overset{\Pb}\to~ x,\quad v\to\infty$$ for some $x\geq 0$, then
\[
  \Big(\frac{\grave S_v(\lfloor vt\rfloor)}{\sigma v^{1/2}}\Big)_{t\geq 0}~\Rightarrow~
(W_\alpha(x,t))_{t\geq 0},\quad v\to\infty.
  \]
\end{theorem}

We note in passing that the case when $\eta$ has a finite mean is much easier to deal with. The scaling limit for $\tilde S$, $\hat S$ and $\grave S$ is then a reflected Brownian motion, see \cite{NgoPeigne, PilipenkoPrykhodko2020}.
Corollary 8.4 in \cite{Helland:1981} is a functional limit theorem for $(v^{-1/2}\bar S (\lfloor vt\rfloor))_{t\geq 0}$ as $v\to\infty$ with the weak limit being an oscillating Brownian motion. Here, $(\bar S(n))_{n\in\mn_0}$
 differs from $(\tilde S(n))_{n\in\mn_0}$ defined in \eqref{tildeS} in that $\bar S(0)=0$ and, for $n\in\mn_0$, $\bar S(n+1):=\bar S(n)+\omega_{n+1}$ provided that $\bar S(n)=0$, and $\omega$, $\omega_1,\ldots$ are i.i.d. real-valued random variables which are independent of $(\xi_k)_{k\in\mn}$ and $(\eta_j)_{j\in\mn}$,  $\E \xi=\E \eta=\E\omega=0$, ${\rm Var}\,\xi<\infty$, ${\rm Var}\,\eta<\infty$ and ${\rm Var}\,\omega\in (0,\infty)$.

We believe that, at the expense of much heavier machinery, the assumption of positivity of $\eta$ could have been relaxed. It seems that Theorems \ref{thm:main} and \ref{thm:main3} should continue to hold whenever the distribution of $\eta$ (possibly taking values of both signs) belongs to the domain of attraction of an $\alpha$-stable distribution, $\alpha\in (0,1)$.

Some ideas of the present work borrow heavily from It\^{o}'s excursion theory as presented in the book \cite{Blumenthal:1992}.
In particular, our argument is very different from that exploited in \cite{Helland:1981}. To make the link visible, observe that the excursions between consecutive crossings of zero of the Markov chain under consideration coincide with the excursions of a random walk driven by $\xi$. Since $\xi$ has a finite second moment, these excursions should be close in some sense to those of a Brownian motion. Thus, the limit process has to behave like a Brownian motion in the upper half-plane. The distribution of the jumps $\eta_1$, $\eta_2,\ldots$ into the positive halfline belongs to the domain of attraction of a stable distribution on $[0,\infty)$. In particular, the sum of these jumps, properly scaled, converges weakly to a stable subordinator. The additional contribution to the limit process is, roughly speaking, made by the composition of the sum of jumps and the number of crossings of $0$ up to time $n$. Even though neither of the composed processes converges weakly under Donsker's scaling, their composition does indeed exhibit growth at the `magic' square-root rate.

In the article \cite{Pilipenko+Prykhodko:2014} a functional limit theorem similar to ours is proved
in a much simpler situation where $\xi$ and $\eta$ are integer-valued and $\xi$ is bounded from below by $-1$. As far as we know there are no other functional limit theorems that would make an explicit link between random walks with a random-jump reflection at $0$ and  $W^{(x)}_\alpha$. Also relevant to the present work are the papers
\cite{LambertSimatos2014, Yano2008, Yano2015} and references therein, in which functional limit theorems are obtained for processes merged together from certain excursions.

The authors of \cite{Pilipenko+Prykhodko:2014} invoke a representation arising in a generalized Skorokhod reflection problem (see \cite{Pilipenko:2012} for more details concerning this problem) as a principal tool. The approach of the cited paper fails in the present setting of real-valued random variables $\xi$ and $\eta$, for no reduction to the generalized Skorokhod reflection problem seems to be possible. As a remedy, we offer a novel argument which forms the main achievement of the paper.

The remainder of the paper is structured as follows. After some preliminary work Theorems \ref{thm:main} and \ref{thm:main3} are proved in Section \ref{sec:pro1} with the help of more general Theorem \ref{thm:main_eta}. The proof of Theorem \ref{thm:main_eta} is given in Section \ref{sec:proof}. Properties of the limit process are investigated in Section \ref{sec:properties}. Finally, the \hyperref[Appendix]{Appendix} collects auxiliary results concerning the $J_1$-convergence of deterministic functions.

\section{Proofs of Theorems \ref{thm:main} and \ref{thm:main3}}\label{sec:pro1}

\subsection{Preliminary discussion}

Let $X_1$, $X_2,\ldots$ be independent copies of a real-valued random variable $X$. Throughout the paper, we adopt generic notation $S_X:=(S_X(n))_{n\in\mn}$ for a random walk with increments
$X_k$, that is, $$S_X(n):=X_1+\ldots+X_n,\quad n\in\mn.$$ Also, we put $$\nu_X(t):=\inf\{k\in\mn: S_X(k)>t\},\quad t\geq 0,$$ so that $(\nu_X(t))_{t\geq 0}$ is the first-passage time process for $(S_X(n))_{n\in\mn}$.

By Donsker's theorem, $$\Big(\frac{S_\xi(\lfloor vt\rfloor)}{\sigma v^{1/2}}\Big)_{t\geq 0} ~\Rightarrow~ (W(t))_{t\geq 0},\quad v\to\infty.$$
Condition \eqref{eq:reg} ensures
\begin{equation}\label{eq:sub}
\Big(\frac{S_\eta(\lfloor vt\rfloor)}{a(v)}\Big)_{t\geq 0}~\Rightarrow~ (U_\alpha(t))_{t\geq 0},
\quad v\to\infty,
\end{equation}
where $a:[0,\infty)\to (0,\infty)$ is any function satisfying $\lim_{v\to\infty}v\Pb\{\eta>a(v)\}=1$.
{ Equivalently,
\begin{equation}\label{eq:sub2}
\Big(\frac{S_\eta(\lfloor c(\sigma^2 v)t\rfloor)}{\sigma v^{1/2}}\Big)_{t\geq 0}~\Rightarrow~ (U_\alpha(t))_{t\geq 0},
\quad v\to\infty,
\end{equation}
where $c$ is generalized inverse of $a^2$.} Furthermore, in view of independence of $\xi$ and $\eta$,
\begin{equation}\label{eq:joint2}
\Big(\frac{S_\xi(\lfloor vt\rfloor)}{\sigma v^{1/2}}, \frac{S_\eta(\lfloor c(\sigma^2 v)t\rfloor)}{\sigma v^{1/2}}\Big)_{t\geq 0}~\Rightarrow~ (W(t),U_\alpha(t))_{t\geq 0},\quad v\to\infty
\end{equation}
in $D^2$ in the product $J_1$-topology, where $W$ and $U_\alpha$ are assumed independent.

\subsection{Reduction to the case of nonpositive starting point and $x=0$}

Below we explain that, without loss of generality, we can assume that the starting point $x$ of the limit processes is $0$ and that $\tilde S_v(0)$, $\hat S_v(0)$ and $\grave S_v(0)$ are a.s.\ nonpositive. We only provide a detailed argument for $\tilde S_v$, for the reasoning for the other two processes is completely analogous.

If $\tilde S_v(0)\leq 0$ a.s., then necessarily $x=0$. Assume that $\tilde S_v(0)>0$ a.s. As a preparation for what follows, we formulate Proposition \ref{corl:excursion_parts} which follows from Proposition \ref{lemma:excursion_parts} given in the Appendix and Skorokhod's representation theorem.
\begin{proposition}\label{corl:excursion_parts}
For $n\in\mn_0$, let $ X_n$ be a strong Markov process and $\tau_n$ an a.s.\ finite stopping time with respect to the natural filtration of $X_n$. Assume that
\begin{itemize}
\item $(X_n(\cdot\wedge \tau_n), \tau_n) \Rightarrow (X_0(\cdot\wedge \tau_0), \tau_0)$, $n\to\infty$, in the product $J_1$-topology in $D^2$;

\item $X_n(\cdot+\tau_n)\Rightarrow X_0(\cdot+ \tau_0)$, $n\to\infty$ in $D$.
\end{itemize}

\noindent Then $X_n\Rightarrow  X_0$, $n\to\infty$ in $D$.
\end{proposition}
Let $(v_n)_{n\in\mn}$ be any sequence of positive numbers which diverges to $+\infty$ as $n\to\infty$. We intend to apply Proposition \ref{corl:excursion_parts} with $X_n(\cdot):= \tilde S_{v_n}(\lfloor v_n\cdot\rfloor)/(\sigma v_n^{1/2})$, $X_0:=W_\alpha^{(x)}$ and $\tau_n:=v_n^{-1}T_{v_n}$, where $T_{v_n}$ is the first entrance time of $\tilde S_{v_n}$ into $(-\infty, 0]$. For each $x\geq 0$, the process $W_\alpha^{(x)}$ is a strong Markov process, see  Theorem 3.11 in Chapter II of \cite{Blumenthal:1992}, and, for $x\neq y$, $W_\alpha^{(x)}$ and $W_\alpha^{(y)}$ have the same transition probabilities. It can be seen from the definition that $W_\alpha^{(x)}(t\wedge \tau_0)=x+W(t\wedge \tau_0)$, where $W$ is a Brownian motion, that is, $W_\alpha^{(x)}$ behaves as a Brownian motion until hitting $0$. It is known that, with this choice, the first limit relation of Proposition \ref{corl:excursion_parts} holds. The process with translated time $(\tilde S_{v_n}(\lfloor v_n t\rfloor +T_{v_n}))_{t\geq 0}$ has the same distribution as $(\tilde S_{v_n}(\lfloor v_n t\rfloor))_{t\geq 0}$ in which $\tilde S_{v_n}(0)$ has the same distribution as $\tilde S_{v_n}(T_{v_n})\leq 0$ a.s. and is assumed independent of $(\xi_k)_{k\in\mn}$ and $(\eta_j)_{j\in\mn}$. According to Proposition \ref{corl:excursion_parts}, we can thus identify the two processes in the subsequent proof.

Left with showing that
\begin{equation}\label{eq:2}
\frac{\tilde S_{v_n}(T_{v_n})}{v_n^{1/2}}\toP 0,\quad n\to\infty
\end{equation}
we fix any $b>0$ and note that on the event $\{T_{v_n}\leq bv_n\}$
$$\Big|\frac{\tilde S_{v_n}(T_{v_n})}{v_n^{1/2}}\Big| \leq \frac{\max_{k\leq bv_n+1}\,|\xi_k|}{v_n^{1/2}}\quad\text{a.s.}$$
(here, the present assumption $\tilde S_{v_n}(0)> 0$ a.s.\ plays a crucial role). Further, the relation
\bel{maxXi1}
\frac{\max_{k\leq bv_n+1}\,|\xi_k|}{v_n^{1/2}}\toP 0,\quad n\to\infty
\ee
is a consequence of $\E \xi^2<\infty$. With these at hand, write, for any $\varepsilon>0$ and any $b>0$,
\begin{multline*}
\Pb\{|\tilde S_{v_n}(T_{v_n})|>\varepsilon v_n^{1/2}\}=\Pb\{\ldots, T_{v_n}\leq bv_n\}+\Pb\{\ldots, T_{v_n}>bv_n\}\\ \leq \Pb\{\max_{k\leq bv_n+1}\,|\xi_k|>\varepsilon v_n^{1/2}\Big\}+\Pb\{T_{v_n}>bv_n\}.
\end{multline*}
Hence, $${\lim\sup}_{n\to\infty} \Pb\{|\tilde S_{v_n}(T_{v_n})|>\varepsilon v_n^{1/2}\}\leq \Pb\{\tau_0>b\},$$ where $\tau_0$ is the first entrance time of $W_\alpha^{(x)}$ into $(-\infty, 0]$. Since $\tau_0$ is a.s.\ finite, we arrive at \eqref{eq:2} on letting $b\to\infty$.

\subsection{Passage to an equivalent model}

Recall that we assume that $\tilde S_v(0)\leq 0$ a.s.\ and that $x=0$. For notational simplicity, we shall omit the index $v$ in the notation, if there is no ambiguity. Also, we shall write $W_\alpha(t)$ for $W_\alpha(0,t)$.

Consider a possible realization of the first five elements of the sequence $\tilde S$: $\tilde S(0)\leq 0$,
$\tilde S(1)= \tilde S(0)+\eta_1\leq 0$, $\tilde S(2)=\tilde S(0)+\eta_1+\eta_2> 0$, $\tilde S(3)=\tilde S(0)+\eta_1+\eta_2+\xi_3>0$,
 $\tilde S(4)=\tilde S(0)+\eta_1+\eta_2+\xi_3+\xi_4\leq 0$, $\tilde S(5)=\tilde S(0)+\eta_1+\eta_2+\xi_3+\xi_4+\eta_5$.
We observe that the variables $\xi_1$, $\xi_2$, $\eta_3,$  $\eta_4$, and $\xi_5$
 are missing in this realization. More generally, for a given $k\in\mn$ any particular
 realization involves either $\xi_k$ or $\eta_k$ but not both. Thus, the presence of missing variables
 is an intrinsic feature of the model. We prefer to work with an equivalent model whose construction
 uses the whole collection $(\xi_k,\eta_k)_{k\in\mn}$ with no gaps. To this end, we define a new
sequence $\tilde {\tilde S}:=(\tilde {\tilde S}(n))_{n\in\mn_0}=(\tilde {\tilde S}_v (n))_{n\in\mn_0}$
 and an auxiliary sequence $(\tilde  T(n))_{n\in\mn_0}$ by
$$\tilde {\tilde S}(0):=\tilde S_v(0),\quad \tilde T(0):=1$$ and, for $n\in\mn$,
\bel{eq:sequence_equiv}
\tilde {\tilde S}(n+1):=
\begin{cases}
\tilde {\tilde S}(n) + \xi_{n+1-\tilde T(n)}, & \tilde {\tilde S}(n) > 0, \\
\tilde {\tilde S}(n) + \eta_{\tilde T(n)}, & \tilde {\tilde S}(n) \leq 0
\end{cases}
\ee
and
$$\tilde T(n+1):=1+\#\{1\leq k\leq n: \tilde {\tilde S}(k) \leq 0\}.$$ Observe that
\begin{equation}\label{eq:repr10}
\tilde {\tilde S}(n)= S_\xi(n - \tilde T(n)) + S_\eta(\tilde T(n)),\quad n\in\mn_0.
\end{equation}
The sequences $\tilde {\tilde S}$ and $\tilde S$ have the same distribution. In view of this Theorem \ref{thm:main} follows if we can prove a counterpart of \eqref{eq:impo}:
\begin{equation}\label{eq:count3}
\Big(\frac{\tilde {\tilde S}(\lfloor vt\rfloor)}{\sigma v^{1/2}}\Big)_{t\geq 0}~\Rightarrow~
(W_\alpha(t))_{t\geq 0},
\quad v\to\infty.
\end{equation}

Further we formulate and prove a general result and obtain Theorems \ref{thm:main} and \ref{thm:main3} as corollaries. Let $\zeta_1$, $\zeta_2,\ldots$ be (possibly dependent) positive random variables {which may depend on $(\xi_k)_{k\in\mn}$, $\tilde S_v(0)$ and/or $\hat S_v(0)$.}

Put $$\mathcal{S}_\zeta(n):=\zeta_1+\ldots+\zeta_n,\quad n\in\mn.$$ For $v>0$, let $S_v^\ast=(S_v^\ast(n))_{n\in\mn_0}$ be a sequence satisfying $S_v^\ast(0)=\tilde S_v(0)$ or $S_v^\ast(0)=\hat S_v(0)$ and
\begin{equation}\label{eq:repr}
S_v^\ast(n)= S_\xi(n - T_v(n)) +\mathcal{S}_\zeta(T_v(n)),\quad n\in\mn_0,
\end{equation}
where $T_v(0):=1$ and, for $n\in\mn$, $$T_v(n+1):=1+\#\{1\leq k\leq n: S_v^\ast(k) \leq 0\}.$$
\begin{theorem}\label{thm:main_eta}
Assume that $\E \xi = 0$, $\sigma^2:={\rm Var}\,\xi\in (0,\infty)$,
\bel{eq:492}
\lim_{n\to\infty}\frac{ {S_{|\xi|}(n)}}{\mathcal{S}_\zeta(n)}=0 \quad {\rm a.s.}
\ee
and that, for some positive function $b$,
\begin{equation}\label{eq:conv_W_AND_U}
\Big(\frac{S_\xi(\lfloor vt\rfloor )}{\sigma v^{1/2} }, \frac{\mathcal{S}_\zeta(\lfloor b(v)t\rfloor)}{{\sigma}v^{1/2}}\Big)_{t\geq 0}~\Rightarrow~ (W(t), U_\alpha(t))_{t\geq 0},\quad v\to\infty
\end{equation}
in the product $J_1$-topology in $D$, where $W$ and $U_\alpha$ are as in Theorem \ref{thm:main}. Assume that $S^\ast_v(0)\leq 0$ a.s. and
\bel{eq:490}
\frac{ S^\ast_v(0)}{v^{1/2}}\toP 0,\quad v\to\infty.
\ee
Then
\begin{equation}\label{eq:impo1}
\Big(\frac{ S^\ast_v(\lfloor vt\rfloor)}{\sigma v^{1/2}}\Big)_{t\geq 0}~\Rightarrow~ (W_\alpha(t))_{t\geq 0},
\quad v\to\infty.
\end{equation}
\end{theorem}
The proof of Theorem \ref{thm:main_eta} will be given in Section \ref{sec:proof}.

\subsection{Proof of Theorem \ref{thm:main}}

In the setting of Theorem \ref{thm:main} we apply Theorem \ref{thm:main_eta} with $\zeta_k=\eta_k$, $S^\ast=\tilde {\tilde S}$ (see \eqref{eq:repr10} and \eqref{eq:repr}). Since relation \eqref{eq:492} holds according to the strong law of large numbers for random walks, and \eqref{eq:conv_W_AND_U}, {with $b(x)=c(\sigma^2 x)$}, is nothing else but \eqref{eq:joint2}, Theorem \ref{thm:main} follows from Theorem \ref{thm:main_eta}.

\subsection{Proof of Theorem \ref{thm:main3}}

By the definitions of $\hat S$ and $\grave S$,
\[
\max_{0\leq k\leq n}|\hat S(k)-\grave S(k)| \leq \max_{0\leq k\leq n}|\xi_k|,\quad n\in\mn\quad \text{a.s.}
\]
In view of \eqref{maxXi1} and Slutsky's lemma we conclude that if one of the processes $(\frac{\hat S_v(\lfloor vt\rfloor)}{\sigma v^{1/2}})_{t\geq 0}$ and $(\frac{\grave S_v(\lfloor vt\rfloor)}{\sigma v^{1/2}})_{t\geq 0}$ converges weakly, then so does the other and the weak limits of the processes are the same. Hence, we only treat $(\frac{\hat S_v(\lfloor vt\rfloor)}{\sigma v^{1/2}})_{t\geq 0}$.

While doing so we apply Theorem \ref{thm:main_eta} with a particular choice of $(\zeta_n)$ that we are now going to explain. To this end, define sequences ${\hat {\hat S}}=({\hat {\hat S}}(n))_{n\in\mn_0}$ and $(\hat T(n))_{n\in\mn_0}$ by
$${\hat {\hat S}}(0):=\hat S_v(0),\quad   \hat T(0):=1$$ and, for $n\in\mn$,
$${\hat {\hat S}}(n+1):=
\begin{cases}
{\hat {\hat S}}(n) + \xi_{n+1-\hat T(n)}, & {\hat {\hat S}}(n) > 0, \\
\eta_{\hat T(n)}, & {\hat {\hat S}}(n) \leq 0
\end{cases}$$
and
$$\hat T(n+1):=1+\#\{1\leq k\leq n: {\hat {\hat S}}(k) \leq 0\}.$$ Put $\Theta_1 = 0$,
\begin{equation}\label{eq:deftheta}
\Theta_i:= \inf\{l > \Theta_{i-1}: {\hat {\hat S}}(l) \leq 0 \},\quad i \geq 2
\end{equation}
and then, for $i\in\mbN$,
\be \label{zetaAndGamma}
\begin{split}
\gamma_i &:= -{\hat {\hat S}}(\Theta_{i}), \\
\zeta_{i} &:= {\hat {\hat S}}(\Theta_{i} + 1) + \gamma_i =\eta_i + \gamma_i.
\end{split}
\ee
The random variable $\Theta_i$ is the time of the $i$-th visit of ${\hat {\hat S}}$ to $(-\infty,0]$. The random variable $\gamma_i$ is the {\it $(i-1)$-th overshoot of ${\hat {\hat S}}$ into $(-\infty,0]$}. Observe that since the random variables $\gamma_1$, $\gamma_2,\ldots$ are i.i.d., so are $\zeta_1$, $\zeta_2,\ldots$. A.s.\ nonnegativity of $\gamma_i$ entails $\zeta_i \geq \eta_i$ a.s. Furthermore, $\zeta_i=\eta_i$ a.s.\ if and only if $\gamma_i = 0$ a.s. This simpler situation of zero overshoot into $(-\infty,0]$ occurs in the setting of \cite{Pilipenko+Prykhodko:2014}, where $\xi$ is an integer-valued random variable with $\xi \geq -1$ a.s., and $\eta$ is a positive integer-valued random variable.

Since $\hat {\hat S}$ satisfies a counterpart of \eqref{eq:repr}
$$\hat {\hat S}(n)=S_\xi(n-\hat T(n))+S_\zeta(\hat T(n)),\quad n\in\mn_0$$
we conclude that $\hat {\hat S}$ is a particular instance of $S^\ast=S^\ast_v$ treated in Theorem \ref{thm:main_eta}.
According to Theorem \ref{thm:main_eta}, we are left with showing that \eqref{eq:492} and \eqref{eq:conv_W_AND_U} hold forour particular choice of $(\zeta_i)$. Since $\E|\xi|<+\infty$ and $\E\eta=+\infty$, we infer
$$0\leq \frac{S_{|\xi|}(n)}{S_\zeta(n)}\leq \frac{S_{|\xi|}(n)}{S_\eta(n)}~\to~ 0,\quad n\to\infty \quad\text{a.s.}$$
by the strong law of large numbers for random walks.
This proves \eqref{eq:492}. Note that the sequence $(\zeta_i)$
depends on $(\xi_k)$. This is allowed in the definition of $(\zeta_i)$.

To check \eqref{eq:conv_W_AND_U}, we need an auxiliary result, Lemma \ref{lem:ratio}, which shows that the standard random walks $S_\eta$ and $S_\zeta$ behave similarly which particularly means that the contribution of the sum of the overshoots is negligible in comparison to $S_\eta$.
\begin{lemma}\label{lem:ratio}
Under the assumptions of Theorem \ref{thm:main3},
\be
\frac{S_\zeta(n)}{S_\eta(n)} \toP 1, \ \ninf.
\ee
\end{lemma}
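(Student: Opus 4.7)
The plan is to decompose $S_\zeta(n) = S_\eta(n) + G_n$ where $G_n := \sum_{i=1}^n \gamma_i$, and note that since $\gamma_i \geq 0$ the desired convergence is equivalent to $G_n/S_\eta(n) \toP 0$. From \eqref{eq:sub}, $S_\eta(n)/a(n) \Rightarrow U_\alpha(1)$, an almost surely positive random variable, while $a$ is regularly varying of index $1/\alpha > 1$; hence it is enough to prove $G_n = o_\Pb(a(n))$, and since $n/a(n) \to 0$ even the cruder bound $G_n = O_\Pb(n)$ would already suffice.

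Next I will use the regenerative structure of $S^\ast$ to observe that $\gamma_1 = 0$ and that $(\gamma_i)_{i \geq 2}$ are i.i.d., each distributed as $\gamma := -\eta' - S_\xi(\tau_{\eta'})$, where $\eta'$ is an independent copy of $\eta$ and $\tau_y := \inf\{k \geq 1: y + S_\xi(k) \leq 0\}$. The key technical step will be to show $\E \gamma < \infty$: conditionally on $\eta' = y$, $\gamma$ is the overshoot of the zero-mean, finite-variance random walk $S_\xi$ starting from $y$ at its first entry into $(-\infty,0]$. I plan to bound the conditional expectation by a Wald-type argument based on truncating $\xi$ at a large level (using that $\Pb\{|\xi|>x\}=o(x^{-2})$, which follows from $\E\xi^2<\infty$) and then integrate against the heavy tail \eqref{eq:reg} of $\eta$. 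Once $\E\gamma<\infty$ is in hand, the strong law of large numbers gives $G_n/n \to \E\gamma$ almost surely, which by the previous paragraph completes the proof.

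The main obstacle will be that, under only $\E\xi^2<\infty$, the uniform bound $\sup_y \E[\gamma \mid \eta'=y] < \infty$ can fail (it does, e.g., if the descending ladder height of $S_\xi$ has infinite second moment), so the straight SLLN route is not automatic. My fall-back plan is then to prove directly the sharper tail estimate $\Pb\{\gamma>x\} = o(x^{-\alpha})$: decompose on whether some jump $|\xi_k|>x$ appears among the first $\tau_y$ steps of the excursion, and combine the geometric waiting time for such a large jump with the classical first-passage bound $\Pb\{\tau_y>n\} \leq C\min(y/\sqrt{n},1)$ to get an estimate on $\Pb\{\gamma>x \mid \eta'=y\}$ that, integrated against \eqref{eq:reg}, yields the claim. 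This implies $\E\gamma^p<\infty$ for some $p>\alpha$, and a Marcinkiewicz--Zygmund strong law of large numbers then forces $G_n = o_\Pb(n^{1/p}) = o_\Pb(a(n))$, finishing the proof.
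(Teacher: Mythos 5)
Your reduction---writing $S_\zeta(n)=S_\eta(n)+G_n$ with $G_n=\sum_{i\le n}\gamma_i$, identifying $\gamma$ as the overshoot of $S_\xi$ into $(-\infty,0]$ started from an independent copy of $\eta$, and observing that $G_n=o_{\Pb}(a(n))$ suffices---agrees with the paper. The gap is in how you propose to control $G_n$: both your primary route ($\E\gamma<\infty$ plus the SLLN, giving $G_n=O_{\Pb}(n)$) and your fall-back (Marcinkiewicz--Zygmund, giving $G_n=o_{\Pb}(n^{1/p})$ for some $p>\alpha$) aim at an \emph{absolute} polynomial growth bound, and neither is attainable under the stated hypotheses. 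You rightly flag that $\E\gamma$ may be infinite when the descending ladder height $\chi$ of $S_\xi$ has infinite second moment, but the fall-back collapses for the same reason. First, the implication ``$\Pb\{\gamma>x\}=o(x^{-\alpha})$ implies $\E\gamma^p<\infty$ for some $p>\alpha$'' is false as a matter of calculus: $\Pb\{\gamma>x\}=x^{-\alpha}/\log x$ is $o(x^{-\alpha})$ yet has no finite moment of any order $p\ge\alpha$. Second, and more seriously, nothing in the hypotheses rules out $1-F_\chi(x)\asymp x^{-1}(\log x)^{-2}$ (so $\E\chi<\infty$ but barely); the renewal representation of the overshoot then gives $\Pb\{\gamma>x\}\gtrsim \Pb\{x<\eta\le 2x\}(\log x)^{-2}\asymp x^{-\alpha}\ell(x)(\log x)^{-2}$, so $\E\gamma^p=\infty$ for every $p>\alpha$ and even $G_n=O_{\Pb}(n)$ fails, since $G_n$ then grows at rate $a(n)$ divided only by a power of $\log n$, which dwarfs $n$. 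A smaller technical point: your bound on $\Pb\{\gamma>x\mid\eta'=y\}$ via ``a jump of size $>x$ occurs at the crossing time'' runs into $\E\tau_y=\infty$ for a recurrent walk, so the geometric-waiting-time argument needs the ladder decomposition anyway.

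What does work---and is what the paper proves---is a \emph{relative} tail bound, $\Pb\{\gamma>x\}=o(\Pb\{\eta>x\})$ as $x\to\infty$, obtained by representing the overshoot over level $z$ of the descending-ladder-height walk as $\int_{[0,z]}(1-F_\chi(z+x-y))\,{\rm d}U_\chi(y)$, splitting on $\{\eta\le Ax\}$ versus $\{\eta>Ax\}$, and invoking the elementary renewal theorem, Karamata's theorem and $x(1-F_\chi(x))\to0$ (which only needs $\E\chi<\infty$, guaranteed by $\E\xi=0$, ${\rm Var}\,\xi<\infty$). The conclusion is then drawn not through moments but through stochastic domination: $\gamma$ is dominated by $\hat\eta$ with $\Pb\{\hat\eta>x\}=\ve\Pb\{\eta>x\}$ for large $x$, so $S_{\hat\eta}(n)/a(n)$ converges weakly to $\ve^{1/\alpha}U_\alpha(1)$, and letting $\ve\downarrow0$ gives $S_\gamma(n)/S_\eta(n)\toP 0$. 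You should replace the SLLN/Marcinkiewicz--Zygmund endgame with this comparison at the level of the stable limit theorem.
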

\begin{proof}
Put $\tau_0:= 0$ and, for $i\in\mn$, $$\tau_{i+1}:=\inf\{k> \tau_i\ :\ S_{\xi}(k)<S_{\xi}(\tau_i)\}\quad\text{and}\quad \chi_i:=S_{\xi}(\tau_{i-1}) - S_{\xi}(\tau_{i}).$$
The elements of the sequences $(\tau_i)_{i\in\mn}$ and $(\chi_i)_{i\in\mn}$ are called {\it descending ladder epochs} and {\it descending ladder heights} of $S_\xi$, respectively. By construction $\chi>0$ a.s. Further, the assumptions $\mbE\xi=0$ and ${\rm Var}\,\xi < \infty$ entail $\mu:=\mbE \chi<\infty,$ see, for instance, formula (4b) in \cite{Doney:1980}. Recall $(\gamma_i)_{i\in\mbN}$ from \eqref{zetaAndGamma} and note that (except for $\gamma_1$ which is $0$ a.s.) these are independent copies of a random variable $\gamma$ with
$$\gamma \overset{{\rm d}}{=} S_\chi(\nu_\chi(\eta))-\eta.$$

We start by showing that
\be\label{iksan2}
\lim_{x\to\infty}\frac{\mbP\{\gamma > x\}}{\mbP\{\eta > x\}} = 0.
\ee
Denote by $F_\chi$ the distribution function of $\chi$ and $U_\chi$ the renewal function for $(S_\chi(n))_{n\in\mn_0}$, that is, $U_\chi(x):= \sum_{n \geq 0} \mbP\{S_\chi(n) \leq x\}$ for $x\in\R$.
Then
$$\mbP\{S_\chi(\nu_\chi(z))-z > x\}=\int_{[0,\,z]} \left(1-F_\chi(z + x - y) \right){\rm d}U_\chi(y),\quad  z,x\geq 0.$$
Further, for any $A > 0$,
\be\label{eq:380}
\begin{split}
\mbP\{\gamma > x\} &=\mbE \int_{[0,\,\eta]}\left( 1- F_\chi(\eta + x - y)\right){\rm d}U_\chi(y) \\
&= \mbE \int_{[0,\,\eta]}(1- F_\chi(\eta + x - y)){\rm d}U_\chi(y)(\1_{\{\eta \leq Ax\}} + \1_{\{\eta  > Ax\}})  \\
&\leq (1-F_\chi(x))\mbE U_\chi(\eta)\1_{\{\eta\leq Ax\}}+ \mbP\{\eta > Ax\}
\end{split}
\ee
having utilized monotonicity of $F_\chi$ for the inequality. By the elementary renewal theorem, $\lim_{x\to\infty}x^{-1} U_\chi(x)=\mu^{-1}$ and thereupon
\begin{equation*}
\frac{\mbE U_\chi(\eta)\1_{\{\eta\leq Ax\}}}{\mbP\{\eta>x\}}~\sim~
\frac{\mbE \eta \1_{\{\eta\leq Ax\}}}{\mu \mbP\{\eta>x\}},\quad x\to\infty.
\end{equation*}
Recalling \eqref{eq:reg} and invoking Karamata's theorem (Theorem 1.6.4 in \cite{Bingham+Goldie+Teugels:1989}) we infer
$$\frac{\mbE  \eta \1_{\{\eta\leq Ax\}}}{\mu \mbP\{\eta>x\}}~\sim~\frac{\alpha A^{1 - \alpha}}{(1 - \alpha)\mu}x,\quad x\to\infty.$$
This yields
\be\label{eq:401}
\lim_{x\to\infty}\frac{(1-F_\chi(x))\mbE U_\chi(\eta)\1_{\{\eta\leq Ax\}}}{\mbP\{\eta>x\}}=0,
\ee
because $\mbE \chi<\infty$ entails $\lim_{x\to\infty}(1-F_\chi(x))x=0$.

It follows from \eqref{eq:380} and \eqref{eq:401} that, for any $A>0$,
\[\limsup_{x\to\infty}\frac{\mbP\{\gamma > x\}}{\mbP\{\eta > x\}}\leq \lim_{x\to\infty}\frac{\mbP\{\eta > Ax\}}{\mbP\{\eta > x\}}=A^{-\alpha}.
\]
Since $A>0$ is arbitrary, we arrive at \eqref{iksan2}. Thus, we have proved that given $\varepsilon>0$ there exists $x_0>0$ such that
\[\mbP\{\gamma >x\} \leq \ve \mbP\{\eta >x\}\] whenever $x\geq x_0$. Let $\hat \eta$ be a random variable with distribution \[\mbP\{\hat \eta > x\} =
\begin{cases}
1, \ &x < x_0, \\
\ve \mbP\{\eta>x\}, \ &x \geq x_0.
\end{cases}\]
Then $\mbP\{\gamma> x\} \leq \mbP\{\hat \eta > x\}$ for $x\geq 0$ and, as a consequence, for each $n\in\mn$,
\be\label{eq:432}
\mathbb{P}\{\gamma_1+\ldots+\gamma_n>x\}\leq \mathbb{P}\{\hat \eta_1+\ldots+\hat \eta_n>x\},\quad x\geq 0,
\ee
where $\hat \eta_1, \hat \eta_2,\ldots$ are independent copies of $\hat \eta$. Since $\mathbb{P}\{\hat \eta > x\}\sim \varepsilon x^{-\alpha}\ell(x)$ as $x\to\infty$, we conclude that a counterpart of \eqref{eq:sub} holds for $(S_{\tilde \eta}(n))_{n\in\mn_0}$. Its specialization to $v=1$ reads
\be\label{eq:434}
\frac{S_{\tilde \eta}(n)}{a_n}~\tow~ \ve^{1/\alpha} U_\alpha(1),\quad n\to\infty.
\ee
Since $\ve>0$ is arbitrary, we deduce from \eqref{eq:432}, \eqref{eq:434} and one-dimensional version of \eqref{eq:sub}
that
\[
\frac{S_\gamma(n)}{S_\eta(n)} \toP 0,\quad n\to\infty.
\]
This completes the proof of Lemma \ref{lem:ratio}.
\end{proof}

{We shall show that \eqref{eq:conv_W_AND_U} holds with $b(x)=c(\sigma^2 x)$, where $c$ is the same as in \eqref{eq:sub2}.} Recall the definition of $\zeta$ from \eqref{zetaAndGamma} and note that the process $(S_\zeta(\lfloor t\rfloor)-S_\eta(\lfloor t\rfloor))_{t\geq 0}$ is a.s.\ nondecreasing. Hence,
for all $T>0$,
\be\label{eq:469}
\sup_{t \in[0,\,T]} \Big|\frac{S_\zeta(\lfloor b(v)t\rfloor)}{v^{1/2}}-
\frac{S_\eta(\lfloor b(v)t\rfloor)}{v^{1/2}}\Big|\leq
 \Big|\frac{S_\zeta(\lfloor b(v)T\rfloor)}{S_\eta(\lfloor
b(v)T\rfloor)}-1\Big|\cdot  \frac{S_\eta(\lfloor b(v)T\rfloor)}{v^{1/2}}~\toP~ 0,\quad v\to\infty,
\ee
where the limit relation is a consequence of Lemma \ref{lem:ratio} and one-dimensional version
of {\eqref{eq:sub2}}. This together with  \eqref{eq:joint2} proves \eqref{eq:conv_W_AND_U}. The proof of Theorem \ref{thm:main3} is complete.

\section{Proof of Theorem \ref{thm:main_eta}}\label{sec:proof}

For $n\in\mn_0$, put
\[m(n) := -\min_{0\leq k \leq n}(S^\ast_v(0)+ S_\xi(k)) \]
and
\be\label{Rdefinition}
R(n):=S^\ast_v(0)+S_\xi(n)+\mathcal{S}_\zeta \circ \bar \nu_\zeta\circ m(n),
\ee
where $\bar \nu_\zeta(t):=\inf\{k\in\mn: \mathcal{S}_\zeta(k)>t\}$ for $t\geq 0$.

Now we explain how the rest of the proof is organized. We start by proving in Lemma \ref{lemma:timeChange} that the sequence $R:=(R(n))_{n\in\mn_0}$ can be obtained
from $S^\ast$ by a time-change.
Lemma \ref{lemma:RConvergence} states that relation \eqref{eq:count3} holds with $R$ replacing $S^\ast$. Finally, Lemma \ref{lemma:lambda_id} makes it clear that the time-change defined in Lemma \ref{lemma:timeChange} is close to the identity mapping. Combining all these auxiliary results we arrive at \eqref{eq:count3}, thereby completing the proof of Theorem \ref{thm:main_eta}.

\subsection{Convergence of the time-changed version of $S^{\ast}$}

In this section we show that $R$ is a time-changed version of $S^{\ast}$, and then we
show that  the scaling limit of $R$ is $W_\alpha$.

Put
\bel{eq:543}
\lambda(n) :=\inf\{k\in\mn : k - T(k)\geq n, \ S^\ast(k) > 0\},\quad n\in\mn_0
\ee
and note that, for $n\in\mn$, $\lambda(n)<\infty$ a.s.\ because $\zeta_1$, $\zeta_2,\ldots$ are a.s.\ positive.
\begin{lemma}\label{lemma:timeChange}
With probability $1$
\be\label{l1} S^\ast(\lambda(n))= R(n),\quad n\in\mn_0, \ee
that is, the sequence $R$ is obtained from $S^\ast$ by the time-change.
\end{lemma}
\begin{proof}
Since $T$ increases by unit jumps only, the definition of $\lambda$ ensures that
\be \label{lemma:timeChange:lambdaTLambda}
\lambda(n) - T(\lambda(n)) = n,\quad n\in\mn_0,
\ee
whence
\be \label{lemma:timeChange:sXi} S_\xi(k - T(k))|_{k = \lambda(n)} = S_\xi(n),\quad n\in\mn_0. \ee Thus, it remains to check that
\be\label{lemma:timeChange:sEta}
T \circ \lambda(n)=\bar \nu_\zeta \circ m(n),\quad n\in\mn_0\quad\text{a.s.}
\ee
Fix $n\in\mn$ and consider the events $A_n:=\{S^\ast(\lambda(n)-1)\leq 0\}$ and $A_n^c=\{S^\ast(\lambda(n)-1)>0\}$. We shall show that

\noindent (I) on $A_n$: \eqref{lemma:timeChange:sEta} holds;

\noindent  (II) on $A_n^c$: $T \circ \lambda(n) - T \circ \lambda(n-1) = 0$; 

\noindent (III) on $A_n^c$: $\bar \nu_\zeta \circ m(n) \leq T \circ \lambda(n)$.

These will guarantee that \eqref{lemma:timeChange:sEta} holds by an induction in $n$. Indeed,
$T \circ \lambda(0)=\bar \nu_\zeta \circ m(0)=1$, that is, \eqref{lemma:timeChange:sEta} holds for $n=0$. Assume that \eqref{lemma:timeChange:sEta} holds for $n=k-1$. The validity of \eqref{lemma:timeChange:sEta} a.s.\ on $A_k$ follows directly from (I). To prove that \eqref{lemma:timeChange:sEta} holds true a.s.\ on $A_k^c$, use (II), (III) and the induction assumption. These yield \[\bar \nu_\zeta \circ m(k) \leq  T \circ \lambda(k) = T \circ \lambda(k - 1) = \bar \nu_\zeta \circ m(k - 1),\] and the claim follows, for
$\bar \nu_\zeta \circ m$ is a.s.\ nondecreasing.

Before going further we state as the claims two properties of the model.
\begin{claim}\label{lemma:timeChange:claimTGrows}
For $k\in\mn_0$,
\[\{T(k+1)>T(k)\}=\{S^\ast(k)\leq 0\}.\]
\end{claim}

This is obvious, no proof is needed.
\begin{claim}\label{lemma:timeChange:claimMinimumAchieved}
For $k\in \mbN$ such that $S^\ast(k) \leq 0$, put
\[
\begin{split}
g(k) := \inf\{ l \in [2, k]:\ &S^\ast(l) \leq 0, \ S^\ast(l-1) > 0 \},
  \\ d(k) := \inf\{ r \geq k:\quad \: \: \: &S^\ast(r) \leq 0, \ S^\ast(r+1) > 0 \}.
\end{split}
\]
Then, for $k\in\mbN$,
\be\label{lemma:timeChange:minimumAchieved} \event{S^\ast(k)\leq 0} \subset \event{-S_\xi(i - T(i)) = m(i - T(i)) \ \text{{\rm for}} \ i\in[g(k) , d(k) + 1]}.
\ee
\end{claim}
\begin{proof}
Put $l=g(k)$ and $r=d(k)$. As $S^\ast(l-1) > 0$ it follows that $T(l) = T(l-1)$. Using representation \eqref{eq:repr} we infer \[S_\xi(l - T(l)) \leq - \mathcal{S}_\zeta(T(l))\quad \text{and}\quad S_\xi(l - 1 - T(l)) > - \mathcal{S}_\zeta(T(l)).\]
The fact that $\mathcal{S}_\zeta$ is a.s.\ nondecreasing implies that the minimum of $S_\xi$ on the interval $[0,\, l - T(l)]$ is achieved at $l - T(l)$. Finally, observe that the function $i \mapsto i-T(i)$ is constant on $[l, r+1]$, because $T(i + 1) = T(i) + 1$ for $i \in [l, r]$.
\end{proof}

With the claims at hand we now prove (I), (II) and (III).

\noindent {\sc Proof of (I).} Fix $\omega \in A_n$ and consider the number of elements of the sequence $(\zeta_k)_{k\in\mn}$ used in the construction of $(S^\ast(j))_{1\leq j \leq \lambda(n)}$. In view of \eqref{eq:repr} this number is $T\circ\lambda(n)$. Further, note that $S^\ast \circ \lambda(n) > 0$, that is, in the notation of Claim \ref{lemma:timeChange:claimMinimumAchieved},
\[d(\lambda(n) - 1) = \lambda(n) - 1. \] Recalling \eqref{lemma:timeChange:lambdaTLambda} and using Claim \ref{lemma:timeChange:claimMinimumAchieved} with $k = \lambda(n) - 1$ we conclude that $$-S_\xi(\lambda(n) - T(\lambda(n))) = m(\lambda(n)-T(\lambda(n))) = m(n).$$ Since $S^\ast(\lambda(n)) > 0$ and $S^\ast(\lambda(n) - 1) \leq 0$, we infer
\[ \begin{split}
  \mathcal{S}_\zeta \circ T(\lambda(n)) &> -S_\xi(\lambda(n) - T(\lambda(n))) = m(n), \\
  \mathcal{S}_\zeta \circ T(\lambda(n) - 1) &\leq -S_\xi(\lambda(n)-1 - T(\lambda(n) -1)) = m(n).
\end{split}
\]
This implies that the number of elements of the sequence $(\zeta_k)_{k\in\mn}$ used in the construction of $(S^\ast(j))_{1\leq j \leq \lambda(n)}$ is equal to $\bar \nu_\zeta\circ m(n)$, because it is the minimal number which makes $\mathcal{S}_\zeta$ greater than $m(n)$.

\noindent {\sc Proof of (II).} Fix $\omega \in A_n^c$. It follows from Claim \ref{lemma:timeChange:claimTGrows} that $\lambda(n - 1) = \lambda(n) - 1$. Also, Claim \ref{lemma:timeChange:claimTGrows} guarantees that $T(\lambda(n)-1) = T\circ \lambda(n)$. Hence, $$T\circ \lambda(n-1)=T(\lambda(n)-1)=T\circ \lambda(n).$$

\noindent {\sc Proof of (III).} The subsequent argument works for both $A_n^c$ and $A_n$.
Since $S^\ast(\lambda(n))>0$ a.s. and according to \eqref{eq:repr},
\begin{equation*}
\begin{split}
\mathcal{S}_\zeta\circ T\circ \lambda(n) &=S^\ast \circ \lambda(n) - S_\xi(\lambda(n)-T\circ \lambda(n))\\
&> m(\lambda(n) - T\circ \lambda(n)) = m(n),
\end{split}
\end{equation*}
applying $\bar \nu_\zeta$ to both sides of the last inequality yields $$T\circ \lambda(n) \geq \bar \nu_\zeta\circ m(n).$$
The proof of Lemma \ref{lemma:timeChange} is complete.
\end{proof}

Recall that $S^\ast=S^\ast_v$ depends on the parameter $v$  (hence, so does $R=R_v$), that $S^\ast_v(0)\leq 0$ a.s.\ and $v^{-1/2}S_v^\ast(0)\toP 0$ as $v\to\infty$.
\begin{lemma}\label{lemma:RConvergence}
Under the assumptions of Theorem \ref{thm:main_eta},
\begin{equation}\label{eq:inter}
\Big(\frac{S_\xi(\lfloor vt\rfloor)}{\sigma v^{1/2}}, \frac{R_v(\lfloor vt\rfloor)}{\sigma v^{1/2}}\Big)_{t\geq 0}~\Rightarrow~
(W(t), W_\alpha(t))_{t\geq 0},\quad v\to\infty.
\end{equation}
\end{lemma}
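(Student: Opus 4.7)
The plan is to leverage the additive representation $R(\lfloor vt\rfloor)/(\sigma v^{1/2})=W_v(t)+V_v(M_v(t))$, where, writing $w:=\sigma v^{1/2}$,
$$W_v(t):=\frac{S_\xi(\lfloor vt\rfloor)}{w},\qquad M_v(t):=\frac{m(\lfloor vt\rfloor)}{w},\qquad V_v(t):=\frac{S_\zeta(\nu_\zeta(wt))}{w}.$$
The whole lemma then reduces to the joint convergence
$$(W_v,M_v,V_v)~\Rightarrow~(W,M,U_\alpha\circ U_\alpha^{\leftarrow})$$
in $D^3$ with the product $J_1$-topology, with $W\perp U_\alpha$ in the limit. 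Granted this, a composition lemma (one of the deterministic $J_1$-results from the \hyperref[Appendix]{Appendix}, applicable because $M$ is a.s.\ continuous and $U_\alpha\circ U_\alpha^{\leftarrow}$ is right-continuous nondecreasing) yields $V_v\circ M_v\Rightarrow U_\alpha\circ U_\alpha^{\leftarrow}\circ M$ jointly with $W_v\Rightarrow W$; since $W$ is a.s.\ continuous, summation is continuous in $D^2$ at the limit point and \eqref{eq:inter} follows.

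To derive the marginal convergence of $V_v$, I would exploit the structure $\zeta_i=\eta_i+\gamma_i$ with $\eta_i$ independent of $\gamma_i$, $\gamma_1=0$, and $(\gamma_i)_{i\ge 2}$ i.i.d. Combining \eqref{iksan2} with the standard convolution identity for regularly varying tails gives $\mbP\{\zeta>x\}\sim\mbP\{\eta>x\}$, so $\zeta$ lies in the same $\alpha$-stable domain of attraction as $\eta$ and with the same normalizing function $a(\cdot)$. Repeating verbatim the chain of arguments leading to \eqref{eq:overshoot}, now with $\zeta$ in place of $\eta$, produces $V_v\Rightarrow U_\alpha\circ U_\alpha^{\leftarrow}$.

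The main obstacle is promoting these marginal convergences to the joint statement: the collection $(\zeta_i)$ depends on both $(\xi_k)$ and $(\eta_j)$ through the overshoots, so $V_v$ is not independent of $(W_v,M_v)$. My plan is to introduce the surrogate
$$V_v^{(\eta)}(t):=\frac{S_\eta(\nu_\eta(wt))}{w},$$
which is a functional of $(\eta_j)$ alone. The independence of $(\xi_k)$ and $(\eta_j)$, together with \eqref{eq:joint} suitably rescaled, delivers the joint convergence $(W_v,M_v,V_v^{(\eta)})\Rightarrow(W,M,U_\alpha\circ U_\alpha^{\leftarrow})$; it then suffices to show
$$\sup_{t\in[0,T]}\bigl|V_v(t)-V_v^{(\eta)}(t)\bigr|~\toP~0,\qquad v\to\infty,$$
for each $T>0$, and conclude by Slutsky's lemma. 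I expect this uniform estimate to be the hardest part; it should be accessible from the sandwich $S_\eta(n)\le S_\zeta(n)\le S_\eta(n)+S_\gamma(n)$, the monotonicity $\nu_\zeta\le\nu_\eta$, uniform control of the overshoot $S_\zeta(\nu_\zeta(x))-x$, and the fact that $S_\gamma(n)/S_\eta(n)\toP 0$ extracted from the proof of Lemma \ref{lem:ratio}.
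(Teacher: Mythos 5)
Your overall architecture (additive decomposition of $R$, a composition lemma for $U_\alpha\circ U_\alpha^{\leftarrow}$ with the continuous nondecreasing $M$, continuity of addition at a continuous summand) matches the paper's. The gap is in the step you yourself flag as the hardest: the uniform estimate $\sup_{t\in[0,T]}|V_v(t)-V_v^{(\eta)}(t)|\toP 0$ is false whenever $\mbP\{\gamma>0\}>0$. The two first-passage processes $x\mapsto S_\zeta(\nu_\zeta(x))$ and $x\mapsto S_\eta(\nu_\eta(x))$ jump at the levels $S_\zeta(n)$ and $S_\eta(n)$ respectively, and these levels differ by $S_\gamma(n)>0$. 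For $x$ in the nonempty interval $(S_\eta(n),S_\zeta(n))$ one has $\nu_\eta(x)\ge n+1$ while $\nu_\zeta(x)\le n$, hence $S_\eta(\nu_\eta(x))-S_\zeta(\nu_\zeta(x))\ge \eta_{n+1}-S_\gamma(n)$. Choosing $n$ so that $\eta_{n+1}$ is a macroscopic jump (of order $\sigma v^{1/2}$, which happens before level $\sigma v^{1/2}T$ with asymptotically positive probability, since $U_\alpha\circ U_\alpha^{\leftarrow}$ crosses positive levels by jumps), the sup-norm difference stays of order one while $S_\gamma(n)/(\sigma v^{1/2})\toP 0$. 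No sandwich or overshoot bound can rescue a statement that is simply not true; at best one could aim at the $J_1$ distance between $V_v$ and $V_v^{(\eta)}$ vanishing, which is a different claim requiring an explicit alignment of the jump epochs.

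The paper avoids this by doing the Slutsky comparison one level earlier, where it is valid: the difference $S_\zeta(\lfloor vt\rfloor)-S_\eta(\lfloor vt\rfloor)=S_\gamma(\lfloor vt\rfloor)$ is nondecreasing in $t$, so its supremum over $[0,T]$ is attained at $T$ and, after division by $a(v)$, tends to $0$ in probability by Lemma \ref{lem:ratio} and \eqref{eq:sub}. This upgrades \eqref{eq:joint} to joint convergence of $(S_\xi,-\min S_\xi,S_\zeta)$ to $(W,M,U_\alpha)$, and only then is the passage to the first-passage/overshoot process made, jointly, via Theorem 3.6 of \cite{Straka+Henry:2011}. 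A secondary flaw in your route is that the increments $\zeta_i=\eta_i+\gamma_i$ are not i.i.d.\ ($\gamma_{i+1}$ is built from $\eta_i$ and the intervening $\xi$'s, so the sequence is $1$-dependent), so you cannot repeat ``verbatim'' the i.i.d.\ chain leading to \eqref{eq:overshoot} with $\zeta$ in place of $\eta$; this, too, is repaired by working with the decomposition $S_\zeta=S_\eta+S_\gamma$ rather than with a tail-equivalence statement for the marginal law of $\zeta$.
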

\begin{proof}
{The process $\Big(\frac{\bar \nu_\zeta(\sigma v^{1/2}t)}{b(v)}\Big)_{t\geq 0}$ is the first-passage time process for $\Big(\frac{\mathcal{S}_\zeta(\lfloor b(v)t\rfloor)}{\sigma v^{1/2}}\Big)_{t\geq 0}$. A composition of these is $\Big(\frac{\mathcal{S}_\zeta(\bar \nu_\zeta(\sigma v^{1/2}t))}{\sigma v^{1/2}}\Big)_{t \geq 0}$. Recalling \eqref{eq:conv_W_AND_U}, an application of Proposition \ref{prop:HenryStraka} in combination with the continuous mapping theorem yields}
\begin{multline}\label{eq:joint}
\Big(\frac{S_\xi(\lfloor vt \rfloor)}{\sigma v^{1/2}},\frac{-\min_{s\in [0,\,t ]}
(S^\ast_v(0)+S_\xi(\lfloor vs \rfloor))}{\sigma v^{1/2}},\frac{\mathcal{S}_\zeta(\bar \nu_\zeta({\sigma}v^{1/2}t))}{
{\sigma}v^{1/2}}\Big)_{t \geq 0}\\
~\Rightarrow~ (W(t),M(t), U_\alpha \circ U_\alpha^{\leftarrow}(t))_{t\geq 0}
\end{multline}
in $D^3$ in the product $J_1$-topology, where, as before, $(W,M)$ and $U_\alpha$ are assumed independent, and $M(t)=-\min_{s\in [0,t]}\,W(s)$ for $t\geq 0$.

Our next step is to prove that, as $v\to\infty$,
\begin{equation}\label{eq:inter2}
\Big(\frac{S_\xi(\lfloor vt \rfloor)}{\sigma v^{1/2}},
 \frac{\mathcal{S}_\zeta(\bar \nu_\zeta(-\min_{s\in [0,\,t]}(S^\ast_v(0)+S_\xi(\lfloor vs\rfloor))))}{\sigma v^{1/2}}\Big)_{t \geq 0}~
\Rightarrow~ (W(t), U_\alpha\circ U^\leftarrow_\alpha\circ M(t))_{t \geq 0}
\end{equation}
in $D^2$ in the product $J_1$-topology. To this end, we intend to invoke Lemma \ref{lemma:composition} given next. Being of principal importance for the proof of Lemma \ref{lemma:RConvergence}, Lemma \ref{lemma:composition} should also be useful as far as other problems involving compositions are concerned, not necessarily related to the setting of the present paper. The proof of this lemma is postponed to the \hyperref[Appendix]{Appendix}.

For $f\in D$, denote by ${\rm Disc}(f):=\{a: \ f(a-) \neq f(a)\}$ the set of discontinuities of $f$.
\begin{lemma}\label{lemma:composition}
For $\ninNo$, let $x_n, \ y_n\in D$, $y_n$ be nondecreasing and $y_0$ continuous. Assume that $\lim_{\ninf}x_n=x_0$ and $\lim_{\ninf} y_n=y_0$ in the $J_1$-topology in $D$ and that if, for some $t\geq 0$, $y_0(t)\in {\rm Disc}(x_0)$, then $\#\{u\geq 0: y_0(u)= y_0(t)\}= 1$. Then
\begin{equation}\label{eq:aux}
\lim_{\ninf}x_n \circ y_n= x_0\circ y_0
\end{equation}
in the $J_1$-topology in $D$.
\end{lemma}

It is known, see, for instance, Lemma 11.17 in \cite{Schilling+Partzsch:2014}, that,
for any fixed $a\geq 0$,
\begin{equation}\label{eq:const}
\mbP\big\{\#\{u\geq 0: M(u) = a\} = 1\big\}= 1.
\end{equation}
Since $U_\alpha\circ U^\leftarrow_\alpha$ and $M$  are independent processes, and the set
${\rm Disc}(U_\alpha\circ U^\leftarrow_\alpha)$ of discontinuities of $U_\alpha\circ U^\leftarrow_\alpha$ is a.s.\ countable, we conclude with the help of \eqref{eq:const} that
\be \mbP \event{\#\{u: M(u) = a\} = 1 \ \text{for}\ a \in {\rm Disc}(U_\alpha\circ U^\leftarrow_\alpha)} = 1.
\ee
Finally, we note that, for each $v\geq 0$, the process $t\mapsto -\min_{s\in [0,\,t]}S_\xi(\lfloor vs\rfloor)$ is a.s.\ nondecreasing,
and the process $M$ is a.s.\ nondecreasing and continuous. Thus, we have checked that
Lemma \ref{lemma:composition} applies to the processes discussed above or rather their
versions whose existence is secured by Skorokhod's representation theorem. As a result, we obtain \eqref{eq:inter2} and thereupon \eqref{eq:inter} because the summation operation (with two summands) is continuous whenever one of the summands is a continuous function, see, for instance, Theorem 4.1 in \cite{Whitt:1980}.
\end{proof}

\subsection{Convergence of the scaled $S^\ast$}
 Note that the sequence $\lambda=\lambda_v$ defined in \eqref{eq:543} depends on $v$.

\begin{lemma}\label{lemma:lambda_id}
For all $t_0>0$,
\be\label{final}
 \sup_{t\in[0,\,t_0]} \Big|\frac{\lambda_v(\lfloor v t\rfloor)}{v} - t\Big|\toP 0,\quad v\to\infty.
 \ee
\end{lemma}
\begin{proof}
For each $v>0$, the sequence $(\lambda_v(n)-n)_{n\geq 0}$ is nondecreasing. Hence,
\[
\sup_{t\in[0,\,t_0]} \Big|\frac{\lambda_v(\lfloor v t\rfloor)}{v} - t\Big|=
\Big|\frac{\lambda_v(\lfloor v t_0\rfloor)}{v} - t_0\Big|,
\]
and it suffices to prove that
\be\label{final1}
   \frac{\lambda_v(\lfloor v t_0\rfloor)}{v}  \toP t_0,\quad v\to\infty
 \ee
for all $t_0>0$.

It follows from \eqref{lemma:timeChange:lambdaTLambda} that
\[  \frac{\lambda_v(n)}{n}=\left(1-\frac{T_v(\lambda_v(n))}{\lambda_v(n)}\right)^{-1},\quad n\in\mn.\]
Since $\lambda_v(n)\geq n$ a.s., it is enough to check that, for any $\ve>0$,
\begin{equation}\label{eq:slln}
\sup_{n\geq \ve v}\frac{T_v(n)}{n}\toP 0, \quad v\to\infty.
\end{equation}
Observe that, for $\delta > 0$,
\be\label{lemma:lambda_id:eventZeta}
  \begin{split}
     \event{T_v(n) \leq  n \delta  }
      \supset \event{S_v^\ast(0)-|\xi_1|-\ldots-|\xi_{n - \lfloor n \delta\rfloor}|+ \zeta_1+ \ldots +\zeta_{\lfloor \delta n\rfloor } > 0}
   \\
\supset \event{S^\ast_v(0)-|\xi_1|-\ldots-|\xi_n|+ \zeta_1+ \ldots + \zeta_{\lfloor \delta n\rfloor} > 0}.
\end{split}
  \ee
By the strong law of large numbers for random walks and \eqref{eq:492},
\[\lim_{n\to\infty}\frac{-|\xi_1|-\ldots-|\xi_n| + \zeta_1+ \ldots +
  \zeta_{\lfloor \delta n\rfloor}}{n}=+\infty \quad \text{a.s.}
\]
whence
\be\label{lemma:lambda_id:eventEta}
\begin{split}
\Pb \event{\sup_{n\geq \ve v}\frac{T_v(n)}{n}\leq \delta} \geq & \\
& \Pb\event{S_v^\ast(0)+\inf_{n\geq \ve v}(-|\xi_1|-\ldots-|\xi_{n}| + \zeta_1+ \ldots + \zeta_{\lfloor \delta n\rfloor})  >0 }~ \to~ 1,
\end{split}
\ee
as $v\to\infty$ having utilized $v^{-1}S^\ast_v(0)\toP 0$ as $v\to\infty$.
\end{proof}

According to Lemma \ref{lemma:timeChange},
\[ R_v( {\lfloor vt\rfloor})= S_v^{\ast} \p{\lambda_v( {\lfloor vt\rfloor}
}= S_v^{\ast} \p[\Big]{\frac{\lambda_v( {\lfloor vt\rfloor}
)}{v}v},\quad t\geq 0,~v>0.\]
The time-change $t \mapsto v^{-1}\lambda_v (vt)$ is discontinuous and nondecreasing (rather than strictly increasing). Hence,
negligibility of the distance in $D$ between  $(v^{-1/2} S_v^\ast(\lfloor vt \rfloor))_{t\geq 0}$ and $(v^{-1/2} R_v( {\lfloor vt\rfloor}))_{t\geq 0}$ as $v\to\infty$ cannot be deduced from the definition of the $J_1$-topology. Lemma \ref{lemma:randomWalkTimeChangeConvergence} is designed to deal with this technicality. Its proof is deferred to the \hyperref[Appendix]{Appendix}.
\begin{lemma}\label{lemma:randomWalkTimeChangeConvergence}
For $n\in\mn_0$, let $\lambda_n$, $f_n\in D$, $\lambda_n$ be nonnegative and nondecreasing. Assume that, for all $T>0$,
\be\label{lemma:randomWalkTimeChangeConvergence:lambda}
\lim_{\ninf}\sup_{t \in [0,\, T]} |\lambda_n(t) - t| = 0
\ee
and
\be\label{lemma:randomWalkTimeChangeConvergence:fConv}
\lim_{n\to\infty} f_n\circ \lambda_n=f_0 
\ee
in the $J_1$-topology in $D$. For $n\in\mn$, denote by $(t_k^{(n)})_{k\in\mn}$ elements of the set ${\rm Disc}(\lambda_n)$ and, for $k\in\mn$, put $u_k^{(n)}:= \lambda_n(t_k^{(n)}-)$ and $v_k^{(n)}:= \lambda_n(t_k^{(n)})$. If, in addition to \eqref{lemma:randomWalkTimeChangeConvergence:lambda} and \eqref{lemma:randomWalkTimeChangeConvergence:fConv}, for all $T>0$,
\be \label{lemma:randomWalkTimeChangeConvergence:condition}
\lim_{n\to\infty} \sup_{k\geq 1}\sup_{s \in [u_k^{(n)}, v_k^{(n)})\cap [0,\, T]} |f_n(s) - f_n(u_k^{(n)}-)|=0,
\ee
then $$\lim_{n\to\infty}f_n=f_0$$ in the $J_1$-topology in $D$.
\end{lemma}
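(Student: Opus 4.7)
The plan is to derive $f_n \to f_0$ in $J_1$ by converting the time-change witnessing $f_n \circ \lambda_n \to f_0$ into a continuous strictly increasing time-change suitable for $f_n$ itself. First I would unwind hypothesis \eqref{lemma:randomWalkTimeChangeConvergence:fConv} to obtain continuous strictly increasing bijections $\mu_n:[0,\infty)\to[0,\infty)$ with $\mu_n(0)=0$ such that, for every $T>0$,
\bes
\sup_{t\in[0,T]}|\mu_n(t)-t|\to 0\quad\text{and}\quad \sup_{t\in[0,T]}|f_n(\lambda_n(\mu_n(t)))-f_0(t)|\to 0.
\ees
Setting $g_n:=\lambda_n\circ\mu_n$, this composition is nondecreasing with $\sup_{t\in[0,T]}|g_n(t)-t|\to 0$ (by the triangle inequality from \eqref{lemma:randomWalkTimeChangeConvergence:lambda} and the proximity of $\mu_n$ to the identity); its jumps occur at $\sigma_k^{(n)}:=\mu_n^{-1}(t_k^{(n)})$, where $g_n$ goes from $u_k^{(n)}$ to $v_k^{(n)}$.

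Next I would construct the candidate time-change $\tilde\mu_n$ by smoothing the jumps of $g_n$. Around each $\sigma_k^{(n)}$ in $[0,T]$ I would choose a short interval $I_k^{(n)}:=[\sigma_k^{(n)}-\delta_k^{(n)},\sigma_k^{(n)}+\delta_k^{(n)}]$, with the $I_k^{(n)}$ pairwise disjoint and $\sum_k \delta_k^{(n)}\to 0$ as $n\to\infty$ (possible because the jumps are isolated and their sizes $v_k^{(n)}-u_k^{(n)}$ are uniformly small). On each $I_k^{(n)}$ I would replace $g_n$ by the affine segment joining $g_n(\sigma_k^{(n)}-\delta_k^{(n)})$ and $g_n(\sigma_k^{(n)}+\delta_k^{(n)})$; any residual flat pieces of $g_n$ between the $I_k^{(n)}$ get tilted by an infinitesimal slope so that the result $\tilde\mu_n$ is strictly increasing. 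By construction $\tilde\mu_n$ is a continuous strictly increasing bijection of $[0,\infty)$ with $\sup_{t\in[0,T]}|\tilde\mu_n(t)-t|\to 0$.

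The verification $\sup_{t\in[0,T]}|f_n(\tilde\mu_n(t))-f_0(t)|\to 0$ would proceed by splitting according to whether $t\in \bigcup_k I_k^{(n)}$. Outside, $\tilde\mu_n(t)$ equals $g_n(t)$ up to the infinitesimal tilt, so $f_n(\tilde\mu_n(t))\approx f_n(g_n(t))\to f_0(t)$ uniformly. For $t$ inside some $I_k^{(n)}$, $\tilde\mu_n(t)\in[u_k^{(n)},v_k^{(n)})$, so \eqref{lemma:randomWalkTimeChangeConvergence:condition} yields $|f_n(\tilde\mu_n(t))-f_n(u_k^{(n)}-)|\to 0$ uniformly in $k$ and $t$; and $f_n(u_k^{(n)}-)$ equals the left limit of $f_n\circ g_n$ at $\sigma_k^{(n)}$, which by the $J_1$-convergence $f_n\circ g_n\to f_0$ (combined with $|t-\sigma_k^{(n)}|\le\delta_k^{(n)}\to 0$) is uniformly close to $f_0(t)$ for $t\in I_k^{(n)}$.

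I expect the main obstacle to be reconciling the strict monotonicity required of the $J_1$ time-change with the absence of any uniform modulus of continuity for $f_n$: a crude additive drift could displace the argument of $f_n$ into a region of large oscillation. The remedy relies on the fact that the flat pieces of $g_n$ have total length bounded by $2\sup_{t\in[0,T]}|g_n(t)-t|\to 0$, so they can be absorbed into the linear ramps replacing the nearby jumps (together with a diagonal choice of the infinitesimal tilt), ensuring that the shifted values of $\tilde\mu_n$ always lie either in the range of $g_n$ (where $f_n$ is controlled via $f_n\circ g_n\to f_0$) or in a gap interval $[u_k^{(n)},v_k^{(n)})$ (where it is controlled by \eqref{lemma:randomWalkTimeChangeConvergence:condition}).
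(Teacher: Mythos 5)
Your overall strategy --- converting the time change that witnesses $f_n\circ\lambda_n\to f_0$ into a continuous, strictly increasing time change for $f_n$ itself by interpolating linearly across the jumps of $\lambda_n$ --- differs from the paper's, which avoids constructing homeomorphisms altogether and instead verifies the sequential criterion of Proposition \ref{prop:criterion} for an arbitrary sequence $t_n\to t_0$, using the generalized inverse $\lambda_n^{\leftarrow}(t_n)$ and a case analysis according to whether $t_n$ lies in the range of $\lambda_n$. As written, your construction has genuine gaps. Two of its supporting claims are false: the jumps of a nondecreasing c\`adl\`ag $\lambda_n$ need not be isolated (they may accumulate, so pairwise disjoint intervals $I_k^{(n)}$, one per jump, need not exist), and the total length of the flat pieces of $g_n=\lambda_n\circ\mu_n$ on $[0,T]$ is not bounded by $2\sup_{t\le T}|g_n(t)-t|$: for $\lambda_n(t)=\varepsilon_n\lfloor t/\varepsilon_n\rfloor$ each flat piece has length $\varepsilon_n$, yet together they cover all of $[0,T]$, so the flat parts cannot simply be ``absorbed into the ramps.''

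More seriously, the verification step does not survive jumps of $f_0$. Since neither $f_n$ nor $f_0$ has a modulus of continuity, ``$\tilde\mu_n(t)=g_n(t)$ up to an infinitesimal tilt'' does not give $f_n(\tilde\mu_n(t))\approx f_n(g_n(t))$; and your fallback --- that the tilted value lies in the range of $g_n$, say $\tilde\mu_n(t)=g_n(t')$ with $t'>t$ close to $t$ --- only yields $f_n(\tilde\mu_n(t))\approx f_0(t')$, which differs from $f_0(t)$ by the size of any jump of $f_0$ in $(t,t']$. Likewise, a single affine ramp on $I_k^{(n)}$ reaches the level $v_k^{(n)}$ at a time that is in general not the time at which $f_0$ jumps, so $f_n\circ\tilde\mu_n$ and $f_0$ disagree by that jump size on part of $I_k^{(n)}$. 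These bad scenarios are in fact excluded by the hypotheses, but showing this requires precisely the case analysis (a flat stretch of $\lambda_n$ followed by a jump, where \eqref{lemma:randomWalkTimeChangeConvergence:condition} applies, versus followed by continuous increase, where uniform convergence of $f_n\circ g_n$ forces $f_0$ to be nearly constant there) that constitutes the real content of the lemma and that the paper carries out through Conditions \ref{prop:criterion:1}--\ref{prop:criterion:3}. Without that analysis the argument is incomplete.
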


For later needs, put
$$Y_n(t):= \frac{\max_{1\leq i\leq \lfloor nt\rfloor}\, |\xi_i|}{n^{1/2}},\quad n\in\mn,~t\geq 0.$$
The assumption $\E\xi^2<\infty$ ensures that
\be\label{maxXi}
Y_n ~\Rightarrow~{\bf 0},\quad n\to\infty
\ee
in $D$, where ${\bf 0}(t):=0$ for $t\geq 0$.

According to the Skorokhod representation theorem in conjunction with Lemmas \ref{lemma:RConvergence} and \ref{lemma:lambda_id}, for any sequence $(v_n)_{n\in\mn}$ which diverges to $+\infty$ as $n\to\infty$, there exists a probability space which accommodates random elements $((S^{(n)}_\xi,  R_{v_n}^{(n)},  \lambda^{(n)}_{v_n},
 \hat Y_n))_{\ninN}$ satisfying, for $\ninN$,
\[
\left(\frac{S^{(n)}_\xi (\lfloor v_nt\rfloor)}{v_n^{1/2}}, \frac{R^{(n)}_{v_n} ({\lfloor v_nt\rfloor})}{v_n^{1/2}}, \frac{\lambda^{(n)}_{v_n}({\lfloor v_nt\rfloor})}{  v_n}, \hat Y_n \right)_{t\geq 0}\overset{{\rm d}}{=}
\left(
\frac{S_\xi( \lfloor v_nt\rfloor)}{ v_n^{1/2}}, \frac{ {R_{v_n}(\lfloor v_nt\rfloor)}}{v_n^{1/2}}, \frac{ {\lambda_{v_n}(\lfloor v_nt\rfloor)}}{v_n}, Y_n \right)_{t\geq 0}
\]
and
\bel{eq:1244}
\lim_{n\to\infty} \left(\frac{S^{(n)}_\xi( \lfloor v_nt\rfloor)}{\sigma v_n^{1/2}}, \frac{R^{(n)}_{v_n}(\lfloor v_nt\rfloor)}{\sigma v_n^{1/2}}, \frac{\lambda^{(n)}_{v_n}(\lfloor v_nt\rfloor)}{  v_n}, \hat Y_n \right)=
\left(
\hat W(t), \hat W_\alpha(t),t,0\right)\quad {{\rm a.s.\ in}}~ D^4,
\ee
where $(\hat W, \hat W_\alpha)$ is a copy of the process $(W, W_\alpha)$. Fix any $\omega$ such that \eqref{eq:1244} holds. The aforementioned new probability space also accommodates
copies $S^{\ast (n)}_{v_n}$ of $S^\ast_{v_n}$, for each $\ninN$.
Representation \eqref{l1} from Lemma \ref{lemma:timeChange} holds for these copies. We are going to apply Lemma \ref{lemma:randomWalkTimeChangeConvergence} with $$f_n(t):=\frac{R^{(n)}_{v_n}(\lfloor v_nt\rfloor )}{\sigma v_n^{1/2}} \ \text{and} \ \lambda_n(t):=\frac{\lambda^{(n)}_{v_n}(\lfloor v_nt\rfloor)}{v_n},\quad n\in\mn,~ t\geq 0.$$ For this particular choice, condition \eqref{lemma:randomWalkTimeChangeConvergence:condition} is justified by the convergence of the fourth coordinate in \eqref{eq:1244} and the fact that the supremum in \eqref{lemma:randomWalkTimeChangeConvergence:condition} does not exceed $\hat Y_n(T)$. By Lemma \ref{lemma:randomWalkTimeChangeConvergence},
\[
\lim_{n\to\infty} \frac{S^{\ast (n)}_{v_n} (\lfloor v_nt\rfloor )}{\sigma v_n^{1/2}}=\hat W_\alpha(t)\quad \text{{\rm in}}~D
\]
for the chosen $\omega$ and thereupon a.s. This completes the proof of Theorem \ref{thm:main_eta}.

\section{Properties of the limit process}\label{sec:properties}

In this section we discuss several properties of the limit process  $W^{(x)}_\alpha=(W_\alpha(x,t))_{t\geq 0}$
arising in Theorem \ref{thm:main} such as self-similarity, properties of excursions and a Markov property. We explain that $W^{(x)}_\alpha$ admits a representation as the solution to a stochastic equation with reflection. Alternatively, it can be thought of as a Feller Brownian motion on $[0,\infty)$ with a `jump-type' exit from $0$.

Let $\kappa>0$. We start by noting that the distribution tail of $\kappa\eta$ satisfies a counterpart of \eqref{eq:reg}, with $\ell$ replaced by $\kappa^{-\alpha}\ell$. Since the slowly varying function $\ell$ from \eqref{eq:reg} does not pop up in the limit process $W^{(x)}_\alpha$, limit relations \eqref{eq:impo} and \eqref{eq:impo1} remain
valid upon replacing $(\eta_n)_{n\in\mn}$ with $(\kappa \eta_n)_{n\in\mn}$. Further, observe that the distribution of $W^{(x)}_\alpha$ does not
change when replacing in \eqref{eq:representation} the process $U_\alpha$ with any other drift-free $\alpha$-stable subordinator (without killing) $V_\alpha$, say. Indeed, the distribution of $V_\alpha$ coincides with the distribution of $(U_\alpha(ct))_{t\geq 0}$ for some $c>0$. An inverse $\alpha$-stable subordinator $V_\alpha^\leftarrow$ has the same distribution as $(c^{-1} U_\alpha^\leftarrow(t))_{t\geq 0}$. Finally, the composition of $(U_\alpha(ct))_{t\geq 0}$ and $(c^{-1}U_\alpha^\leftarrow(t))_{t\geq 0}$ is $(U_\alpha\circ U_\alpha^\leftarrow(t))_{t\geq 0}$ (for all $\omega$).

For all $x\geq 0$, the processes $W_\alpha^{(x)}$ are homogeneous Markov processes. Furthermore, these are Feller processes, see Theorem 3.11 in Chapter II of \cite{Blumenthal:1992} and for
 $x\neq y$, $W_\alpha^{(x)}$ and $W_\alpha^{(y)}$ have the same transition probabilities.

It follows from the definition that $W_\alpha^{(x)}$ behaves like the Brownian motion $W$ until it hits $0$. Thus, $W_\alpha^{(x)}$ is a Feller Brownian motion, that is, a Markov extension of a Brownian motion after hitting $0$.
\begin{remk}
In the theory of Markov processes one usually considers a process $Y$, say under the collection of measures $\Pb_x(\cdot):=\Pb(\cdot|Y(0)=x)$ for $x\geq 0$.
For our needs it is more convenient to work with the collection of processes $W_\alpha^{(x)}$, indexed by the initial starting point $x\geq 0$,
under a single probability measure $\Pb$. We hope this does not lead to a confusion.
\end{remk}

By Theorem \ref{thm:main},
\begin{large}
\begin{center}
\begin{tikzcd}
(v^{-1/2} \tilde S(v ct))_{t\geq 0} \arrow[equal]{d}\quad\quad~\Rightarrow ~ & (W_\alpha(ct))_{t\geq 0} \\
(c^{1/2}(vc)^{-1/2}\tilde S(v ct))_{t\geq 0}~\Rightarrow~& (c^{1/2} W_\alpha(t))_{t\geq 0} \\
\end{tikzcd}
\end{center}
\end{large}
that is, the process $W_\alpha$ is self-similar with exponent $1/2$. Using this in combination with
$1/2$ self-similarity of a Brownian motion started at $x$ and stopped upon hitting $0$ and the strong Markov property of $W_\alpha^{(x)}$ we conclude that, for any $c>0$ and $x\geq 0$, the process $(W_\alpha^{(x)}(ct))_{t\geq 0}$ has the same
distribution as $\left(c^{1/2} W^{(c^{-1/2} x)}_\alpha(t)\right)_{ t\geq 0}$.

Now we describe the process $W_\alpha^{(x)}$ from the resolvent point of view. To this end, define the resolvent
\[U^\lambda f(x):=\E \int_0^\infty e^{-\lambda s}f(W^{(x)}_\alpha(s)){\rm d}s,\quad x\geq 0.\]
Denote by $V^\lambda f(x)$ the resolvent of a Brownian motion on $[0,\infty)$ killed at $0$. It is known that
\[
V^\lambda f(x)=\int_0^\infty v^\lambda(x,y) f(y){\rm d}y,\quad x>0,
\]
where $ v^\lambda(x,y):=\frac1{\sqrt{2\lambda}}(e^{-\sqrt{2\lambda}|x-y|}-e^{-\sqrt{2\lambda}|x+y|})$ for $x,y>0$, see p.~56 in \cite{Blumenthal:1992}. Invoking the general theory of Markov processes one can show that
\[U^\lambda f(x)= V^\lambda f(x) + \E_x e^{-\lambda \sigma_0} U^\lambda f(0)=V^\lambda f(x) +  e^{-x\sqrt{2\lambda}} U^\lambda f(0),\quad x>0,\]
where $\sigma_0$ is the first hitting time of $0$, see p.~57 in \cite{Blumenthal:1992}. Note that this formula holds true for any Markov extension of a Brownian motion after hitting $0$. It follows from Theorem 3.11 in Chapter II of \cite{Blumenthal:1992} that
\be\label{eq:resolvent}
U^\lambda f(0)=\Delta_\lambda^{-1} \int_0^\infty V^\lambda f(x) \frac{{\rm d}x}{x^{1+\alpha}},
\ee
where
\[\Delta_\lambda= \int_0^\infty   (1-e^{-x\sqrt{2\lambda}}) \frac{{\rm d}x}{x^{1+\alpha}}=\frac{(2\lambda)^{\alpha/2}\Gamma(1-\alpha)}{\alpha}.\]
The last equality is obtained with the help of integration by parts.
\begin{remk}
The book \cite{Blumenthal:1992} only focuses on the case $\lambda=1$. However, the case $\lambda\neq 1$ is analogous. Note that the value of the norming constant
$\Delta_\lambda$ can be derived from the equality $U^\lambda 1(x)=\lambda^{-1}$, $x\geq 0$.
\end{remk}
\begin{remk}
Equation \eqref{eq:resolvent} entails
that the entrance law for $W^{(x)}_\alpha$ is given by
$$\frac{\alpha}{ 2^{\alpha/2}\Gamma(1-\alpha)}\int_0^\infty P_t^0(x, {\rm d}y) \frac{{\rm d}x}{x^{1+\alpha}},$$ see Chapter V, \S2 in \cite{Blumenthal:1992},
where
$$P_t^0(x, {\rm d}y)=(2\pi t)^{-1/2} (e^{-\frac{(x-y)^2}{2t}}-e^{-\frac{(x+y)^2}{2t}}){\rm d}y$$
 is the transition kernel of the semigroup for a Brownian motion killed at $0$.
\end{remk}
Summarizing, we conclude that the resolvent kernel $r^\lambda(x,y)$ of $W_\alpha$ is given by
\[
r^\lambda(x,y)= v^\lambda(x,y)+\Delta_\lambda^{-1} \int_0^\infty v^\lambda(z,y)
 \frac{{\rm d} z}{z^{1+\alpha}},\quad x,y>0.
\]

Now we are going to point out the distributions of $(W(t), -\min_{s\in [0,\,t]} W(s))$ and $U_\alpha \circ U_\alpha^{\leftarrow}(t)$. According to Problem 1 on p.~27 in \cite{Ito+McKean:1996},
\[
\Pb\{W(t)\in {\rm d}a, \max_{s\in [0,\,t]} W(s)\in {\rm d}b\}=\left(\frac{2}{\pi t^3}\right)^{\frac12}
(2b-a)e^{(2b-a)^2/2t} {\rm d}a {\rm d}b,\quad t>0, \ 0\leq b,\ b\geq a.
\]
As a consequence,
\[
\Pb\{W(t)\in {\rm d}a,\, -\min_{s\in [0,\,t]} W(s)\in {\rm d}b\}=\left(\frac{2}{\pi t^3}\right)^{\frac12}(2b+a)e^{(2b+a)^2/2t}{\rm d}a {\rm d}b,\quad t>0, \ 0\leq b,\ b+a\geq 0.
\]
Notice that $U_\alpha (U_\alpha^\leftarrow (t))-t$ is the overshoot of the process $U_\alpha$ at $t>0$. It follows from the Dynkin-Lamperti asymptotics (see, for instance, p.~135 in \cite{Kyprianou:2014}) and self-similarity of $U_\alpha\circ U_\alpha^{\leftarrow} $ with exponent $1$
(which is a consequence of \eqref{eq:joint} restricted to the third coordinate) that $$\Pb\Big\{\frac{U_\alpha \circ U_\alpha^{\leftarrow}(t)}{t}\in{\rm d}x\Big\}=
\frac{\sin(\pi\alpha)}{\pi}\frac{\1_{(1,\infty)}(x)}{(x-1)^\alpha x}{\rm d}x, \quad x>0,
$$
whence
 \[
 \Pb\{U_\alpha \circ U_\alpha^{\leftarrow}(t)\in{\rm d}x\}=
\frac{t^\alpha\sin(\pi\alpha)}{\pi}\frac{\1_{(t,\infty)}(x)}{(x-t)^\alpha x}{\rm d}x,\quad x>0.
 \]
Absolute continuity of this distribution particularly implies that, for all $s>0$, \[\Pb\{W^{(x)}_\alpha(s)=0\}=0.\]
Thus, the process $W^{(x)}_\alpha$ spends zero time at $0$ with probability $1$.

Even though the distributions of $(W(t), -\min_{s\in [0,\,t]} W(s))$ and $U_\alpha \circ U_\alpha^{\leftarrow}(t)$ are known explicitly we have been unable to find an explicit form of the transition density of $W^{(x)}_\alpha$.

According to \cite{Pilipenko:2012}, there exists a unique pair of nonnegative  processes $(\hat W^{(x)}_\alpha, L^{(x)}_\alpha)$ satisfying
a generalized Skorokhod reflection problem
\bel{eq:ReflW}
\hat W^{(x)}_{\alpha}(t)=x+ W(t)+U_\alpha(L_\alpha^{(x)}(t)),\quad t\geq 0.
\ee
Here, the unknown process $L_\alpha^{(x)}$ is a.s.\ continuous, nondecreasing and satisfies
\bel{eq:L}
L_\alpha^{(x)}(0)=0 \ \text{and} \ \int_{[0,\,\infty)}\1_{\{\hat W^{(x)}_\alpha(s)>0\}}{\rm d}L_\alpha^{(x)}(s)=0.
\ee
Comparing \eqref{eq:ReflW} and \eqref{eq:representation} we conclude that
$\hat W^{(x)}_{\alpha}(t)= W^{(x)}_{\alpha}(t)$ and  $L_\alpha^{(x)}(t)=U_\alpha^{\leftarrow} \circ ((-x+M(t))^+)$ for $t\geq 0$.

It follows from \eqref{eq:ReflW} and \eqref{eq:L} (or just from formula  \eqref{eq:representation}, or the It\^{o} excursion theory together with \eqref{eq:resolvent}) that
the increments of $W^{(x)}_\alpha$ coincide with those of $W$ while $W^{(x)}_\alpha$ is positive. If $W^{(x)}_\alpha$ is discontinuous at $t$, then $W_\alpha(x,t-)=0$ and $W_\alpha(x,t)=U_\alpha(L_\alpha^{(x)}(t))- U_\alpha(L_\alpha^{(x)}(t)-)$, that is, jumps from $0$ are governed by the process $U_\alpha$ and further controlled by an ``inner'' time given by $L_\alpha^{(x)}$. Further, if $(l(t_0), r(t_0))$ is an excursion interval of $W^{(x)}_\alpha$ that straddles a point  $t_0>0$, then $W_\alpha(x, l(t_0))>0$ a.s. This implies that there is a `jump-type' exit from $0$ rather than a `continuous' exit.
\begin{remk}
An alert reader will notice that any right neighborhood $(r(t_0),r(t_0)+\ve)$ contains an infinite number
of excursions with probability $1$. Thus, the picture is similar to the behavior of the excursions of a Brownian motion.
\end{remk}
The process $L_\alpha^{(x)}$ is a continuous additive functional of the Markov process $W^{(x)}_\alpha$ whose points of increase
are supported by the set $\{s\geq 0\ : \ W^{(x)}_\alpha(s)=0\}$, see p.~68--69 in \cite{Blumenthal:1992}.
Thus, $L_\alpha^{(x)}$ is the {\it Blumenthal-Getoor local time} up to a multiplicative constant.
In particular, the process $L_\alpha^{(x)}$ is $\cF_t$-adapted, where
$\cF_t$ is a completion by sets of zero measure of the $\sigma$-algebra generated
by $(W_\alpha^{(x)}(s))_{s\in[0,\,t]}$. This claim, which is not obvious, follows, for instance, from either of the following two representations for $L_\alpha^{(x)}$. The first one, in Theorem \ref{thm:local1}, is in terms of the number of jumps of $W^{(x)}_\alpha$.
The other, in Theorem \ref{thm:local2}, is in terms of the number of large excursions of
 $W^{(x)}_\alpha$ up to time $t$.
\begin{theorem}\label{thm:local1}
For any $T>0$, the convergence
\begin{multline*}
\lim_{\ve\to0+}\ve^\alpha \big(\mbox{the number of jumps of } W^{(x)}_\alpha~ \mbox{on } [0,\,t]~ \mbox{which are not smaller than}~
\ve\big)\\=
\lim_{\ve\to 0+} \ve^\alpha \sum_{s\in[0,\,t]}\1_{\{W^{(x)}_\alpha(s)-W^{(x)}_\alpha(s-)\geq \ve\}}=
L_\alpha^{(x)}(t)
\end{multline*}
is uniform in $t\in [0,\,T] $ with probability $1$.
\end{theorem}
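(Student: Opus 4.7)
The plan is to exploit the generalized Skorokhod representation \eqref{eq:ReflW}, which rewrites the excerpt's decomposition \eqref{eq:representation} as
\[
W_\alpha^{(x)}(t)=x+W(t)+U_\alpha\bigl(L_\alpha^{(x)}(t)\bigr),\quad t\ge 0,
\]
with $W$ continuous and $L_\alpha^{(x)}$ continuous and nondecreasing. Since $W$ is continuous and $L_\alpha^{(x)}$ is continuous, every jump of $W_\alpha^{(x)}$ at a time $s$ equals $U_\alpha(L_\alpha^{(x)}(s))-U_\alpha(L_\alpha^{(x)}(s)-)$, which is the jump of $U_\alpha$ at the level $L_\alpha^{(x)}(s)$. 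Because $L_\alpha^{(x)}$ is continuous and nondecreasing, the image of $[0,t]$ under $L_\alpha^{(x)}$ is the interval $[0,L_\alpha^{(x)}(t)]$, and jumps of $U_\alpha$ at interior levels of this interval are in bijection (with matching sizes) with jumps of $W_\alpha^{(x)}$ on $[0,t]$. Thus, writing
\[
N_\ve(u):=\#\{r\in[0,u]:U_\alpha(r)-U_\alpha(r-)\ge\ve\},\quad u\ge 0,
\]
one obtains, up to the negligible event that $U_\alpha$ jumps exactly at the level $L_\alpha^{(x)}(T)$,
\[
\sum_{s\in[0,t]}\1_{\{W_\alpha^{(x)}(s)-W_\alpha^{(x)}(s-)\ge\ve\}}=N_\ve\bigl(L_\alpha^{(x)}(t)\bigr),\quad t\in[0,T].
\]

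The next step is to identify $N_\ve$. From \eqref{eq:1} the Lévy measure of $U_\alpha$ equals $\nu(\de y)=\alpha y^{-1-\alpha}\de y$ on $(0,\infty)$, so $\nu([\ve,\infty))=\ve^{-\alpha}$; consequently $u\mapsto N_\ve(u)$ is a Poisson process of rate $\ve^{-\alpha}$. The classical strong law of large numbers for a Poisson process yields, for each fixed $u>0$,
\[
\ve^\alpha N_\ve(u)\toas u,\quad\ve\to 0+.
\]
Taking a countable sequence $\ve_n\downarrow 0$ and using that $u\mapsto \ve^\alpha N_\ve(u)$ is nondecreasing while the limit $u\mapsto u$ is continuous, a standard Glivenko--Cantelli-type argument (monotone convergence to a continuous limit on a dense set implies uniform convergence on compacts) gives, for every $S>0$,
\[
\sup_{u\in[0,S]}\bigl|\ve^\alpha N_\ve(u)-u\bigr|\toas 0,\quad\ve\to 0+.
\]

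Finally, since $L_\alpha^{(x)}$ is continuous and nondecreasing, $L_\alpha^{(x)}(T)$ is a.s.\ finite, and $L_\alpha^{(x)}(t)\in[0,L_\alpha^{(x)}(T)]$ for all $t\in[0,T]$. Applying the previous display with $S=L_\alpha^{(x)}(T)$ (which is admissible by conditioning on the a.s.\ finite random variable $L_\alpha^{(x)}(T)$, or by choosing a deterministic $S$ large enough on an event of arbitrarily high probability) gives
\[
\sup_{t\in[0,T]}\bigl|\ve^\alpha N_\ve(L_\alpha^{(x)}(t))-L_\alpha^{(x)}(t)\bigr|\toas 0,
\]
which is the claim. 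The main technical point is the jump-by-jump correspondence between $W_\alpha^{(x)}$ and $U_\alpha$, which relies crucially on the continuity of both $W$ and $L_\alpha^{(x)}$; the remaining analysis is the standard LLN for a Poisson process promoted to uniform convergence via monotonicity.
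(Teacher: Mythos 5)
Your argument is correct and follows essentially the same route as the paper: identify the jumps of $W^{(x)}_\alpha$ on $[0,t]$ with the jumps of $U_\alpha$ on $[0,L^{(x)}_\alpha(t)]$, apply the strong law of large numbers to the Poisson count $N_\ve(u)=N([0,u]\times[\ve,\infty))$ using $\nu([\ve,\infty))=\ve^{-\alpha}$, upgrade to uniform convergence in $u$ via monotonicity of $u\mapsto \ve^\alpha N_\ve(u)$ and continuity of the limit, and compose with the a.s.\ continuous $L^{(x)}_\alpha$. The paper words the counting step through the L\'evy--It\^o Poisson random measure viewed as an inhomogeneous Poisson process in $\ve^{-1}$, but this is the same computation, so no substantive difference remains.
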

\begin{theorem}\label{thm:local2}
For any $T>0$, the convergence
\begin{multline*}
\lim_{\ve\to0+}\ve^{\alpha/2} \big(\mbox{the number of excursion intervals  of  } W^{(x)}_\alpha~ \mbox{on }~ [0,\,t]~\\
\mbox{whose lengths are not smaller than } \ve\big)
=\frac{\Gamma((1-\alpha)/2)}{(\pi 2^\alpha)^{1/2}}\varepsilon^{-\alpha/2}L_\alpha^{(x)}(t)
\end{multline*}
is uniform in $t\in [0,\,T] $ with probability $1$.
\end{theorem}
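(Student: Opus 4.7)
My plan is to reduce Theorem \ref{thm:local2} to a strong law of large numbers for the jumps of an $\frac{\alpha}{2}$-stable subordinator, namely the inverse local time
\[
\sigma(\ell):=\inf\{t\geq 0:\ L_\alpha^{(x)}(t)>\ell\},\quad \ell\geq 0.
\]
Take $x=0$ for the moment. Inverting the relation $L_\alpha^{(0)}(t)=U_\alpha^\leftarrow(M(t))$ (given just before Theorem \ref{thm:local1}) in both coordinates yields $\sigma(\ell)=\tau_{U_\alpha(\ell)}$, where $\tau_a:=\inf\{t\geq 0:\ M(t)>a\}=\inf\{t\geq 0:\ W(t)\leq -a\}$ is the classical $\frac{1}{2}$-stable hitting-time subordinator of $W$, with Laplace exponent $\sqrt{2\lambda}$ and independent of $U_\alpha$. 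Conditioning on $U_\alpha(\ell)$ and invoking \eqref{eq:1},
\[
\E\exp(-\lambda\sigma(\ell))=\E\exp(-U_\alpha(\ell)\sqrt{2\lambda})=\exp\bigl(-\ell\,\Gamma(1-\alpha)(2\lambda)^{\alpha/2}\bigr),
\]
identifying $\sigma$ as a drift-free $\frac{\alpha}{2}$-stable subordinator. A routine Laplace-exponent comparison gives that its Lévy measure $\Pi$ satisfies $\Pi([\varepsilon,\infty))=\frac{\Gamma(1-\alpha)\,2^{\alpha/2}}{\Gamma(1-\alpha/2)}\varepsilon^{-\alpha/2}$, and Legendre's duplication formula $\Gamma(1-\alpha)=\pi^{-1/2}2^{-\alpha}\Gamma((1-\alpha)/2)\Gamma(1-\alpha/2)$ rewrites this as
\[
\Pi([\varepsilon,\infty))=\frac{\Gamma((1-\alpha)/2)}{(\pi 2^\alpha)^{1/2}}\,\varepsilon^{-\alpha/2},
\]
precisely the constant appearing in the theorem.

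Next, I would exploit the one-to-one correspondence between the jumps of $\sigma$ and the excursion intervals of $W_\alpha^{(x)}$: a jump of $\sigma$ at inner time $\ell$ of size $\Delta\sigma(\ell)$ is the length of the excursion $(\sigma(\ell-),\sigma(\ell))$ of $W_\alpha^{(x)}$, as read off from the Skorokhod representation \eqref{eq:ReflW}--\eqref{eq:L}. Writing $N_\varepsilon(t)$ for the excursion count in the theorem, this gives
\[
N_\varepsilon(t)=M_\varepsilon(L_\alpha^{(x)}(t))+O(1),\qquad M_\varepsilon(\ell):=\#\{\ell'\in(0,\ell]:\ \Delta\sigma(\ell')\geq\varepsilon\},
\]
where the $O(1)$ is bounded uniformly in $t\in[0,T]$ and $\varepsilon>0$, absorbing at most one excursion straddling $t$ together with (when $x>0$) the initial excursion from $x$ of finite length; this boundary term is killed after multiplication by $\varepsilon^{\alpha/2}$.

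For each fixed $\varepsilon>0$, the process $\ell\mapsto M_\varepsilon(\ell)$ is a homogeneous Poisson process of rate $\Pi([\varepsilon,\infty))$; equivalently, viewed as a function of the scale parameter $u:=\varepsilon^{-\alpha/2}$ it is a homogeneous Poisson process of rate $C_\alpha\,\ell$ with $C_\alpha:=\frac{\Gamma((1-\alpha)/2)}{(\pi 2^\alpha)^{1/2}}$. The strong law of large numbers for Poisson processes then gives $\varepsilon^{\alpha/2}M_\varepsilon(\ell)\to C_\alpha\ell$ a.s.\ as $\varepsilon\to 0+$, for each fixed $\ell\geq 0$. Since both $\varepsilon^{\alpha/2}M_\varepsilon(\cdot)$ and the limit are nondecreasing in $\ell$ and the limit is continuous, the second Dini theorem upgrades this to uniform convergence on $[0,L]$ a.s.\ for every $L>0$. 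Composing with the a.s.\ continuous nondecreasing map $t\mapsto L_\alpha^{(x)}(t)$ yields the uniform convergence in $t\in[0,T]$ asserted in the theorem.

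The main technical difficulty I foresee lies in the rigorous identification $\sigma=\tau\circ U_\alpha$ and in the excursion-jump bijection: both amount to unpacking the generalized Skorokhod representation \eqref{eq:ReflW}--\eqref{eq:L}, where the non-strict monotonicity of $L_\alpha^{(x)}$ on excursion intervals and the right-continuous jump-from-zero convention for $W_\alpha^{(x)}$ need to be handled explicitly.
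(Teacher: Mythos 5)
Your proposal is correct and reaches the right constant, but it computes it by a genuinely different route than the paper. The paper's proof marks each jump $(t_k,u_k)$ of the Poisson random measure $N$ driving $U_\alpha$ with an independent Brownian first-passage time $u_k^2\theta_k$ (the length of the excursion launched by a jump of height $u_k$), so that the excursion-length point process becomes a Poisson random measure $M$ with intensity ${\rm LEB}\otimes\rho$, where $\rho({\rm d}z)=\int_{(0,\infty)}f(u,z)\,\nu({\rm d}u)\,{\rm d}z$ is computed by an explicit Gamma-integral (substitution $s=u^2/(2z)$), yielding $\rho([\ve,\infty))=\frac{\Gamma((1-\alpha)/2)}{(\pi 2^\alpha)^{1/2}}\ve^{-\alpha/2}$. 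You instead identify the inverse local time $\sigma=\tau\circ U_\alpha$ as a drift-free $\tfrac{\alpha}{2}$-stable subordinator via Bochner subordination of Laplace exponents, read off its L\'{e}vy tail from the exponent $\Gamma(1-\alpha)(2\lambda)^{\alpha/2}$, and convert the constant with Legendre's duplication formula; the excursion lengths are then the jumps of $\sigma$ rather than marks on the jumps of $U_\alpha$. These are two descriptions of the same Poisson point process of excursion lengths in local time, and from that point on the two arguments coincide (SLLN for Poisson processes along $\ve\to 0+$, upgrade to uniformity by monotonicity plus continuity of the limit, composition with the a.s.\ continuous nondecreasing $L_\alpha^{(x)}$, and an $O(1)$ boundary correction for the excursion straddling $t$ and, when $x>0$, the initial excursion). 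Your route avoids the density $f(u,z)$ and the double integral entirely, at the price of the identification $\sigma=\tau\circ U_\alpha$ and the jump--excursion bijection, which you correctly flag as the place requiring care; the paper is no more rigorous there (it records the identification of the excursion-count process with the marked point process only as an equality in distribution), so this is not a gap relative to the paper's own standard.
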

\begin{proof}[Proof of Theorem \ref{thm:local1}]
The proof of this result can be found in \cite{Blumenthal:1992}. We recall its main steps because similar arguments are used in the proof of Theorem \ref{thm:local2}, and also for completeness.

Consider the L\'{e}vy-It\^{o} representation of $U_\alpha$
\[
U_{\alpha}(t)= \int_{s\in[0,\,t]}\int_{[0,\,\infty)} u N({\rm d}s, {\rm d}u),\quad t\geq 0.
\]
Here, $N:=\sum_k \delta_{(t_k, u_k)}$ is a Poisson random measure on $[0,\infty)\times (0,\infty]$ with
intensity measure ${\rm LEB}\otimes \nu$; $\delta_{(t,x)}$ is the probability measure concentrated at $(t,x)$; ${\rm LEB}$ is the Lebesgue
measure on $[0,\infty)$, and $\nu$ is the L\'{e}vy measure given by
\begin{equation}\label{eq:levy}
\nu({\rm d}u)=\alpha u^{-1-\alpha}\1_{(0,\infty)}(u){\rm d}u,\quad u\in\mathbb{R}.
\end{equation}

For any $\ve>0$,
\begin{multline}\label{eq:559}
\mbox{the number of jumps of}~ W^{(x)}_\alpha~ \mbox{on } [0,\,t]~ \mbox{ which are not smaller than}~ \ve\\=
\mbox{the number of jumps of }~ U_\alpha~ \mbox{ on } [0,\,L_\alpha^{(x)}(t)]~
\mbox{which are not smaller than}~ \ve\\= N([0,\,L_\alpha^{(x)}(t)]\times [\ve,\infty)).
\end{multline}
By the strong law of large numbers for Poisson processes, for any fixed $t\geq 0$,
\bel{eq:lim_pois}
\lim_{\ve\to 0+}\ve^\alpha N([0,\,t]\times [\ve,\infty))=t\quad \text{a.s.}
\ee
because, for each $t>0$, the process $(N([0,\,t]\times [u^{-1},\infty)))_{u>0}$ is an
inhomogeneous Poisson
process of intensity $u\mapsto u^{\alpha}t$. As a consequence, relation \eqref{eq:lim_pois} holds
true with probability $1$ for all rational $t\geq 0$.
Since, for each $\ve>0$, the process $(\ve^\alpha N([0,\,t] \times [\ve,\infty)))_{t\geq 0}$ is a.s.
\ nondecreasing, and the limit function in \eqref{eq:lim_pois} is continuous, we infer,
for all $T>0$,
\[
\lim_{\ve\to0+}\sup_{t\in [0,\,T]}\big|\ve^\alpha N([0,\,t]\times [\ve,\infty))- t\big|=0\quad\text{a.s.}
\]
This in combination with \eqref{eq:559} and a.s.\ continuity of $L_\alpha^{(x)}$ completes the proof.
\end{proof}

\begin{proof}[Proof of Theorem \ref{thm:local2}]
Let $\theta_1$, $\theta_2,\ldots$ be independent copies of $\theta:=\inf\{ t\geq 0\ :\ 1+W(t)=0\}$.
By self-similarity of $W$ and the fact that $-W$ has the same distribution as $W$,
\[
u^2\theta ~\overset{{\rm d}}=~ \inf\{ t\geq 0\ :\ u+W(t)=0\}~\overset{{\rm d}}=~ \inf\{ t\geq 0\ :\ -u+W(t)=0\},\quad u> 0,
\]
where $\overset{{\rm d}}=$ denotes equality of distributions. Using this in combination with formula 2) on p.~25 in \cite{Ito+McKean:1996} we conclude that
$$\Pb\{u^2 \theta\in {\rm d}z\}=\frac{u}{\sqrt{2\pi z^3}} e^{-u^2/2z}\1_{(0,\infty)}(z){\rm d}z=:f(u,z){\rm d}z,\quad z\in\R,~u>0.$$
We shall use the Poisson random measure $N$ (or rather its atoms) defined in the proof of Theorem \ref{thm:local1}. Recall that $N$ and $W$ are independent. We proceed by noting that
\begin{multline*}
\big(\text{the number of excursion intervals of}~~ W^{(x)}_\alpha~~ \text{starting in}~ [0,\,t]~~\\ \text{whose lengths are not smaller than}~ \ve \big)_{t\geq 0}~\overset{{\rm d}}=~\Big(\sum_{t_k\leq L_\alpha^{(x)}(t) }\sum_{u_k} \1_{\{u_k^2\theta_k
\geq \ve\}}\Big)_{t\geq 0}.
\end{multline*}
Further,
$$
\left(
\sum_{t_k\leq t}\sum_{u_k} \1_{\{u_k^2\theta_k \geq \ve\}}\right)_{t\geq 0}~\overset{{\rm d}}
=~ \left(
\int_{[0,\,t]} \int_{[\varepsilon,\infty)} M({\rm d}s, {\rm d}v)\right)_{t\geq 0},
$$
where $M$ is a Poisson random measure on $[0,\infty)\times (0,\infty]$ with
intensity measure ${\rm LEB}\otimes \rho$, and $\rho$ is a measure on $(0,\infty)$ defined by $$\rho({\rm d}z)=\int_{(0,\infty)}f(u,z)\nu({\rm d}u){\rm d}z,\quad z\in\R$$ with the L\'{e}vy measure $\nu$ defined in \eqref{eq:levy}.
In particular,
\begin{multline*}
\nu([\ve, \infty))=\int_\varepsilon^\infty \int_0^\infty \frac{u}{\sqrt{2\pi z^3}} e^{-u^2/2z}  \frac{\alpha }{u^{1+\alpha}}{\rm d}u{\rm d}z=
\frac{\alpha}{2(\pi 2^\alpha)^{1/2}}\int_{\ve}^\infty z^{-1-\frac{\alpha}{2}}{\rm d}z \int_0^\infty e^{-s} s^{- \frac{1+\alpha}{2}}{\rm d}s\\=
\frac{\Gamma((1-\alpha)/2)}{(\pi 2^\alpha)^{1/2}}\varepsilon^{-\alpha/2},
\end{multline*}
where the second equality follows by the change of variable $s =u^2/(2z)$ .

The remaining part of the proof, which is similar to the corresponding part of the proof of Theorem \ref{thm:local1},
commences with checking the asymptotic relation: for any fixed $t\geq 0$,
$$\lim_{\ve\to 0+}\ve^{\alpha/2} M([0,\,t] \times [\ve,\infty))=\frac{\Gamma((1-\alpha)/2)}{(\pi 2^\alpha)^{1/2}}t\quad \text{a.s.}$$
Observe that the number of excursion intervals of $W^{(x)}_\alpha$
starting in $[0,\,t]$ whose lengths are not smaller than $\ve$ exceeds at most by one
the number of excursion intervals of $W^{(x)}_\alpha$ belonging to $[0,\,t]$ whose lengths are not smaller than $\ve$.
\end{proof}

\begin{appendix}
\section*{Appendix}\label{Appendix}

In this section we collect a couple of technical results related to the $J_1$-convergence.
  We start with a proposition which follows from the definition of the $J_1$-convergence in $D$. The result is used for the justification of Proposition \ref{corl:excursion_parts}.
\begin{proposition}\label{lemma:excursion_parts}
For $n\in\mn_0$, let $f_n\in D$. Assume that for a sequence $(T_n)_{n\in\mn_0}$ of nonnegative
numbers the following limit relations hold:
\begin{itemize}

\item $\lim_{n\to\infty}T_n=T_0$;

\item $\lim_{n\to\infty}f_n(\cdot\wedge T_n)=f_0(\cdot\wedge T_0)$ in $D$;

\item $\lim_{n\to\infty} f_n(\cdot+ T_n)= f_0(\cdot+ T_0)$ in $D$.
\end{itemize}
Then $\lim_{n\to\infty} f_n=f_0$ in $D$.
\end{proposition}

Proposition \ref{prop:HenryStraka}, borrowed from Proposition 2.3 in \cite{Straka+Henry:2011}, is used in the proof of Lemma \ref{lemma:RConvergence}. We write $D([0,\infty)\times\mbR^d)$ for the Skorokhod space of c\`{a}dl\`{a}g functions defined on $[0,\infty)\times\mbR^d$.
\begin{proposition}\label{prop:HenryStraka}
For $n\in\mn_0$, let $(\alpha_n, \beta_n)\in D([0,\infty)\times\mbR^d)$. Assume that, for $n\in\mn$, $\alpha_n$ are nondecreasing, nonnegative and unbounded, that $\alpha_0$ is increasing and unbounded, and that $\lim_{n\to\infty}(\alpha_n,\beta_n)=(\alpha_0,\beta_0)$ in the $J_1$-topology in $D([0,\infty)\times\mbR^d)$. Then $\lim_{n\to\infty} \beta_n\circ \alpha_n^\leftarrow= \beta_0\circ \alpha_0^\leftarrow$ in the $J_1$-topology in $D(\mbR^d)$, where, for $n\in\mn_0$ and $t\geq 0$, $\alpha_n^\leftarrow(t):=\inf\{s\geq 0\ : \ \alpha_n(s)>t\}$.
\end{proposition}

The function $\alpha_n^\leftarrow$ is called generalized inverse of $\alpha_n$ or the first-passage time function of $\alpha_n$.

We  proceed with a classical characterization of the $J_1$-convergence which can be found in
Proposition 6.5 of \cite{Ethier+Kurtz:1986}.
\begin{proposition}\label{prop:criterion}
For $\ninNo$, let $z_n\in D$. Then $\lim_{\ninf} z_n = z_0$ in the $J_1$-topology
in $D$
if, and only if, for any $u_0\geq 0$ and any sequence $(u_n)_{\ninN}$ of nonnegative numbers satisfying $\lim_{\ninf}u_n = u_0$, the following conditions hold.
\begin{enumerate}[label=\textbf{C.\roman*}]
\item \label{prop:criterion:1} All limit points of $(z_n(u_n))_{\ninN}$ are either $z_0(u_0)$ or $z_0(u_0-)$.
\item \label{prop:criterion:2} If $\lim_{\ninf}z_n(u_n)=z_0(u_0)$, then
$\lim_{\ninf}z_n(v_n)=z_0(u_0)$ for any sequence $(v_n)_{\ninN}$ satisfying
$v_n \geq u_n$ for $\ninN$ and $\lim_{\ninf}v_n=u_0$.

\item \label{prop:criterion:3} If $\lim_{\ninf}z_n(u_n)=z_0(u_0-)$, then $\lim_{\ninf}z_n(v_n)=z_0(u_0-)$ for any sequence $(v_n)_{\ninN}$ satisfying
$v_n \leq u_n$ for $\ninN$ and $\lim_{\ninf}v_n=u_0$.
\end{enumerate}
\end{proposition}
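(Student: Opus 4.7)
The plan is to unpack both implications from the classical definition of $J_1$-convergence: $z_n\to z_0$ in $(D,J_1)$ means there exist strictly increasing homeomorphisms $\lambda_n$ of $[0,\infty)$ onto itself with $\lambda_n(0)=0$ and, for every $T>0$, $\sup_{t\in[0,T]}|\lambda_n(t)-t|\to 0$ together with $\sup_{t\in[0,T]}|z_n(\lambda_n(t))-z_0(t)|\to 0$. Throughout, I will repeatedly use that ${\rm Disc}(z_0)$ is at most countable and that $z_0$ possesses one-sided limits everywhere.

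For the necessity direction ($\Rightarrow$), I would fix $u_n\to u_0$ and set $s_n:=\lambda_n^{-1}(u_n)$; uniform convergence $\lambda_n\to\mathrm{id}$ forces $s_n\to u_0$. Writing $z_n(u_n)=(z_n\circ\lambda_n)(s_n)$ and using uniform convergence of $z_n\circ\lambda_n$ to $z_0$ on compacts, the question reduces to analysing $z_0(s_n)$ modulo terms going to zero. Passing to a sub-subsequence along which $s_n$ tends to $u_0$ monotonically from above or from below, right-continuity and the existence of left limits of $z_0$ show that any subsequential limit of $z_n(u_n)$ is either $z_0(u_0)$ or $z_0(u_0-)$, yielding \textbf{C.i}. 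To verify \textbf{C.ii}, suppose $z_n(u_n)\to z_0(u_0)$ and $v_n\geq u_n$ with $v_n\to u_0$; then $t_n:=\lambda_n^{-1}(v_n)\geq s_n$, $t_n\to u_0$, and a short case analysis (the limit $z_0(u_0)$ rules out accumulation of $s_n$ strictly below $u_0$ at a genuine jump of $z_0$, unless $z_0(u_0-)=z_0(u_0)$ in which case there is nothing to prove) pins $z_n(v_n)=(z_n\circ\lambda_n)(t_n)$ to the same limit. \textbf{C.iii} is symmetric.

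For the sufficiency direction ($\Leftarrow$), the plan is to build the time-changes $\lambda_n$ explicitly, given the three conditions. Fix $T>0$ and $\delta>0$. Since $z_0\in D$, it has only finitely many jumps of size greater than $\delta$ on $[0,T]$, say at times $\tau_1<\cdots<\tau_N$. At any continuity point $u_0$ of $z_0$, conditions \textbf{C.i}--\textbf{C.iii} collapse to $z_n(u_n)\to z_0(u_0)$ for every $u_n\to u_0$, which gives strong uniform control away from the $\tau_k$. At each $\tau_k$, applying \textbf{C.ii} at $u_0=\tau_k$ together with \textbf{C.iii} forces the existence of a time $t_k^{(n)}\to\tau_k$ at which $z_n$ makes a comparable jump, with values on either side close to $z_0(\tau_k-)$ and $z_0(\tau_k)$. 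I would then define $\lambda_n$ to be piecewise linear, sending each $\tau_k$ to $t_k^{(n)}$ and coinciding with the identity outside small neighborhoods; the resulting $\sup$-norm between $z_n\circ\lambda_n$ and $z_0$ is bounded by $\delta$ plus an error tending to zero with $n$, and a standard diagonal argument with $\delta=\delta_n\to 0$ completes the construction.

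I expect the sufficiency step to be the main obstacle. The delicate points are to match the discontinuities of $z_n$ to those of $z_0$ consistently, to keep $\lambda_n$ strictly increasing with $\lambda_n(0)=0$, and simultaneously to control the intermediate oscillations so that $\sup_{t\in[0,T]}|z_n(\lambda_n(t))-z_0(t)|\to 0$. Matching with accuracy uniform over the countably many jumps requires a careful diagonal choice of $\delta_n$, whose speed of decay is dictated by the uniform modulus implied by \textbf{C.i} applied at continuity points; once the matching is set up, the rest of the argument is essentially bookkeeping.
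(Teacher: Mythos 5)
The paper does not prove this proposition at all: it is quoted as a classical characterization and attributed to Proposition 6.5 in Ethier and Kurtz, so there is no in-paper argument to compare yours against. Your outline is essentially the standard textbook proof of that result. The necessity half is complete in all but notation: setting $s_n=\lambda_n^{-1}(u_n)$, reducing $z_n(u_n)$ to $z_0(s_n)+o(1)$, and splitting subsequences according to whether $s_n$ approaches $u_0$ from above or below does give \textbf{C.i}, and your case analysis for \textbf{C.ii} (if $z_0(u_0-)\neq z_0(u_0)$ and $z_n(u_n)\to z_0(u_0)$, then $s_n\geq u_0$ eventually, hence $t_n=\lambda_n^{-1}(v_n)\geq u_0$ and right-continuity finishes) is exactly the right one.

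The sufficiency half is where your write-up stops short of a proof. Two load-bearing steps are asserted rather than derived. First, the ``strong uniform control'' away from the big jumps and the two-sided control near each $\tau_k$ do not follow pointwise from \textbf{C.i}--\textbf{C.iii}; they are sequential conditions, and upgrading them to statements uniform over $t$ in a compact set requires an explicit subsequence--contradiction argument (if uniformity failed, pick witnesses $u_n$, pass to a convergent subsequence, and contradict one of the three conditions). Second, the existence of a single matching jump time $t_k^{(n)}\to\tau_k$ is not really a consequence of \textbf{C.ii}/\textbf{C.iii} as you state; the decisive point is \textbf{C.i}: if $z_n$ passed from a neighborhood of $z_0(\tau_k-)$ to a neighborhood of $z_0(\tau_k)$ through intermediate values near $\tau_k$, those intermediate values would be limit points of some $z_n(u_n)$ with $u_n\to\tau_k$ lying outside $\{z_0(\tau_k),z_0(\tau_k-)\}$, which \textbf{C.i} forbids; \textbf{C.ii} and \textbf{C.iii} then pin the values to the correct side of $t_k^{(n)}$. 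With these two mechanisms made explicit, the piecewise-linear construction of $\lambda_n$ and the diagonal choice of $\delta_n$ go through as you describe, but as written the proposal names the difficulties without resolving them, so it is an outline rather than a proof of the sufficiency direction.
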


Proposition  \ref{prop:criterion} will now be essentially used for the proofs
of Lemmas \ref{lemma:composition} and \ref{lemma:randomWalkTimeChangeConvergence}.
\begin{proof}[Proof of Lemma \ref{lemma:composition}]
Fix any $t_0>0$ and let $(t_n)_{\ninN}$ be a sequence satisfying $\lim_{n\to\infty}t_n=t_0$. Since $y_0$ is continuous by assumption, the $J_1$-convergence $\lim_{\ninf} y_n=y_0$ is equivalent to locally uniform convergence. This entails $\lim_{\ninf} y_n(t_n)=y_0(t_0)$.

To prove \eqref{eq:aux} we intend to show that Conditions \textbf{C.i,ii,iii} of Proposition
 \ref{prop:criterion} hold with
$z_n=x_n\circ y_n$, $u_n=t_n$ and $u_0=t_0$. While doing so, we use the other implication of
Proposition \ref{prop:criterion} with $z_n=x_n$, $u_n=y_n(t_n)$ and $u_0=y_0(t_0)$, namely,
the passage from the $J_1$-convergence $\lim_{\ninf}x_n=x_0$ to the corresponding Conditions \textbf{C.i,ii,iii}.

Condition  \ref{prop:criterion:1}. In view of $\lim_{\ninf}x_n= x_0$, Condition \textbf{C.i} of Proposition \ref{prop:criterion} tells us that the limit points of the sequence $(x_n\circ y_n(t_n))_{\ninN}=(x_n(u_n))_{\ninN}$ are either $x_0(u_0)=x_0\circ y_0(t_0)$ or $x_0(u_0-)=x_0(y_0(t_0)-)$. Thus, it suffices to prove that either $x_0(y(t_0)-) = x_0\circ y_0(t_0)$ or $x_0(y_0(t_0)-)=x_0\circ y_0(t_0-)$.  Indeed, if $y_0(t_0) \notin {\rm Disc}(x_0)$, then $x_0(y_0(t_0)-)=x_0\circ y_0(t_0)$. If $y_0(t_0)\in {\rm Disc}(x_0)$, then using the assumptions that  $y_0$ is nondecreasing and that $\#\{u\geq 0: y_0(u)= y_0(t)\}= 1$ we infer $y_0(s)<y_0(t_0)$ for any $s<t_0$, whence $x_0(y_0(t_0)-) = x_0\circ y_0(t_0-)$. It remains to note that, in view of right-continuity, Condition  \ref{prop:criterion:1} obviously holds true for $t_0=0$.

Condition  \ref{prop:criterion:2}. Assume that $\lim_{\ninf} t_n=t_0$ and $\lim_{\ninf}x_n \circ y_n(t_n)= x_0\circ y_0(t_0)$. Let $(s_n)_{\ninN}$ be any sequence satisfying $s_n \geq t_n$ for $\ninN$ and $\lim_{\ninf}s_n=t_0$.
 Since $y_n$ is nondecreasing, we infer $$v_n=y_n(s_n)\geq y_n(t_n)=u_n,\quad \ninN.$$
It has already been mentioned that $\lim_{\ninf}y_n(s_n)=y_0(t_0)$ in view of continuity of $y_0$. Thus,
using $\lim_{\ninf}x_n= x_0$ and invoking Condition \textbf{C.ii} of Proposition \ref{prop:criterion} we conclude that
\[x_n\circ y_n(s_n)=x_n (v_n)~\to~ x_0(u_0) = x_0\circ y_0(t_0),\quad \ninf.\]

Condition  \ref{prop:criterion:3} can be checked analogously.
\end{proof}

\begin{proof}[Proof of Lemma \ref{lemma:randomWalkTimeChangeConvergence}]
Fix any $T>0$. Let $(t_n)_{\ninNo}$ be any sequence satisfying $t_n\in [0, T)$ for $n\in\mn_0$. Assume that the limit $\lim_{n\to\infty}f_n(t_n)$ exists. We shall use Proposition \ref{prop:criterion}.

For a function $x$, denote by ${\rm Range}(x)$ its range, that is, the set of all possible values of $x$. To verify Condition \ref{prop:criterion:1} of Proposition \ref{prop:criterion} we have to show that $\lim_{n\to\infty}f_n(t_n) \in \{f(t_0-),f(t_0)\}$. Assume first that there exists a subsequence $(t_{n_k})_{k\in\mn}$ such that $t_{n_k}\in {\rm Range}(\lambda_{n_k})$ for all $k\in\mn$. Without loss of generality, we can and do assume that $t_n \in {\rm Range}(\lambda_{n})$ for all $n\in\mn$.

For $\ninN$, put \[\mu_n := \lambda_n^{\leftarrow}(t_n)\] ($\lambda_n^{\leftarrow}$ is right-continuous generalized inverse of $\lambda_n$)
and note that
  \[
  \lim_{n\to\infty}f_n(t_n)= \lim_{n\to\infty}f_n\circ\lambda_n(\mu_n).
  \]
In view of \eqref{lemma:randomWalkTimeChangeConvergence:lambda} it follows that $\lim_{n\to\infty} \mu_n = t_0$. Formula \eqref{lemma:randomWalkTimeChangeConvergence:fConv} and Condition  \ref{prop:criterion:1} of Proposition \ref{prop:criterion} imply that $\lim_{n\to\infty}f_n\circ\lambda_n(\mu_n)\in \{f(t_0-), f(t_0)\}$, whence
  \[
  \lim_{n\to\infty}f_n(t_n)\in \{f(t_0-), f(t_0)\}.
  \]

Assume now that $t_n \notin {\rm Range}(\lambda_{n})$ for all $n\in\mn$ (we do not need to investigate an intermediate situation in which $t_n \notin {\rm Range}(\lambda_{n})$ for some $n$ and $t_n\in {\rm Range}(\lambda_{n})$ for the other $n$; indeed, passing to a subsequence we can ensure that exactly one of these alternatives prevails for all values of indices). Then, with the same $\mu_n$ as before,
\be\label{lemma:randomWalkTimeChangeConvergence:uNvN}
u_n:=\lambda_n(\mu_n-) \leq t_n < \lambda_n(\mu_n)=: v_n,\quad n\in\mn.
\ee
For each fixed $n$, there are two possibilities: either $\lambda_n(\mu)<u_n$ for $\mu < \mu_n$ or $\lambda_n(\mu)=u_n$ for $\mu\in [\mu_n - \ve_n, \mu_n]$ for some $\ve_n>0$. Assuming that the first possibility prevails for all $n\in\mn$, we select a sequence $(\rho_n)_{n\in\mn}$ satisfying
$\rho_n < \mu_n$ for all $n\in\mn$, $\lim_{n\to\infty} \rho_n = t_0$ and
\[\lim_{n\to\infty}|f_n(\lambda_n(\rho_n)) - f_n(u_n-)|=0, \quad \lim_{n\to\infty}|\lambda_n(\rho_n) - u_n|=0.\]
This is possible because 
\[\lim_{\mu \to \mu_n-}f_n(\lambda_n(\mu))=\lim_{t \to u_n-} f_n(t)= f_n(u_n-).
  \]
Using \eqref{lemma:randomWalkTimeChangeConvergence:uNvN} in combination with \eqref{lemma:randomWalkTimeChangeConvergence:condition} yields \[|f_n(t_n)-f_n(\lambda_n(\rho_n))|\leq |f_n(t_n)-f_n(u_n-)|+|f_n(u_n-)-f_n(\lambda_n(\rho_n))| \to 0,\quad n\to\infty.\]
Hence, $\lim_{n\to\infty} f_n(\lambda_n(\rho_n))$ exists and is equal to 
$\lim_{n\to\infty} f_n(t_n)$. According to \eqref{lemma:randomWalkTimeChangeConvergence:fConv} and Condition \ref{prop:criterion:1} of Proposition \ref{prop:criterion} we have $\lim_{n\to\infty} f_n(t_n) \in  \{f(t_0-), f(t_0)\}$.

Assume now that, for each $n$, there exists $\ve_n>0$ such that $\lambda_n(\mu)=u_n$ for $\mu \in [\mu_n-\ve_n, \mu_n]$. Let $(\rho_n)_{n\in\mn}$ be any sequence satisfying $\rho_n \in [\mu_n-\ve_n, \mu_n)$ for $n\in\mn$ and $\lim_{n\to\infty} \rho_n = t_0$. As a consequence of $t_n\in[u_n, v_n)$ (see \eqref{lemma:randomWalkTimeChangeConvergence:uNvN}), as $n\to\infty$,
\[|f_n(t_n)-f_n(\lambda_n(\rho_n))|= |f_n(t_n)-f_n(u_n)|\leq
|f_n(t_n)-f_n(u_n-)|+|f_n(u_n-)-f_n(u_n))| \to 0.
  \]
Hence, $\lim_{n\to\infty} f_n(\lambda_n(\rho_n))$ exists and is equal to
$\lim_{n\to\infty} f_n(t_n)$. By the same argument as before we infer $\lim_{n\to\infty} f_n(t_n)\in  \{f(t_0-), f(t_0)\}$.

Conditions \ref{prop:criterion:2} and \ref{prop:criterion:3} of Proposition \ref{prop:criterion} can be verified similarly.
\end{proof}
\end{appendix}
%
%

\begin{acks}[Acknowledgments]
We thank two anonymous referees for many useful suggestions which significantly improved the presentation of our results. Our special thanks go to one of the referees who has kindly informed us about the line of research on the oscillating random walks and provided a list of relevant references.
\end{acks}
\begin{funding}
A. Iksanov and A. Pilipenko acknowledge support by the National Research Foundation of Ukraine
(project 2020.02/0014 ``Asymptotic regimes of perturbed random walks: on the edge of modern and
classical probability''). A. Pilipenko was also partially supported by the Alexander von Humboldt Foundation within
the Research Group Linkage Programme {\it Singular diffusions: analytic and stochastic approaches}.
\end{funding}





\end{document}